\shorttitle{Random variables with stationary digits} 
\newcommand{\R}{\mathbb{R}}
\newcommand{\dd}{\mathrm{d}}
\newcommand{\rmP}{\mathrm{P}}
\numberwithin{equation}{section}  
\begin{document}

\title{Characterization of random variables\\ with stationary digits} 

\authorone[Aalborg University]{Horia D. Cornean} 
\authortwo[University of Virginia]{Ira W. Herbst}
\authorone[Aalborg University]{Jesper M{\O}ller}
\authorone[Aalborg University]{Benjamin B. St{\O}ttrup}
\authorone[Aalborg University]{Kasper S. S{\O}rensen}

\addressone{Department of Mathematical Sciences, Aalborg University, Skjernvej 4A, 9220 Aalborg, Denmark} 
\addresstwo{Department of Mathematics, University of Virginia, Charlottesville, VA 22903, USA}

\begin{abstract}
    		Let $q\ge2$ be an integer, $\{X_n\}_{n\geq 1}$ a stochastic process with state space $\{0,\ldots,q-1\}$, and $F$ the cumulative distribution function (CDF) of 
    		$\sum_{n=1}^\infty X_n q^{-n}$. We show that stationarity of $\{X_n\}_{n\geq 1}$ is equivalent to a functional equation obeyed by $F$ and use this to characterize the characteristic function of $X$ and 
    		the structure of $F$ in terms of its Lebesgue decomposition. More precisely, while the absolutely continuous component of $F$ can only be the uniform distribution on the unit interval, its discrete component can only be a countable convex combination of certain explicitly computable CDFs for probability distributions with finite support. 
    		We also show that $\dd F$ is a Rajchman measure if and only if $F $ is the uniform CDF on $[0,1]$. 
\end{abstract}

\keywords{Functional equation; Lebesgue decomposition; Minkowski's question-mark function; mixture distribution; Rajchman measure; singular function} 

\ams{60G10}{60G30} 
         
\section{Introduction}\label{s:intro}

Consider a random variable $X$ on the unit interval $[0,1]$ which is given by the base-$q$ expansion
\begin{equation}\label{e:X}
X:=(0.X_1X_2\ldots)_q:=\sum_{n=1}^\infty X_n q^{-n},
\end{equation}
where $q\in\mathbb N$ (the set of natural numbers), and where the digits  
$\{X_n\}_{n\ge1}$ form a stochastic process with values in $\{0,\ldots,q-1\}$. The case where the $X_n$'s are independent identically distributed (IID) has been much studied in the literature (see Sections~\ref{s:background} and \ref{s:future}). The present paper deals with the more general case where $\{X_n\}_{n\geq 1}$ is \emph{stationary}, i.e., when $\{X_n\}_{n\ge1}$ and $\{X_n\}_{n\ge2}$ are identically distributed -- for short we refer to this setting as \emph{stationarity}.
As we will see, under stationarity there is almost surely a one-to-one correspondence between $\{X_n\}_{n\ge1}$ and $X$, and we will 
study various properties of  the cumulative distribution function (CDF) of $X$ given by
\begin{equation}\label{e:Fdef}
F(x):=\mathrm P(X\le x),\qquad x\in\mathbb R
\end{equation} 
and its associated probability measure $\mathrm d F$.

\subsection{Background}\label{s:background}

Assuming the $X_n$'s are IID, the stochastic process $\{X_n\}_{n\geq 1}$ is a so-called Bernoulli scheme. Then, in the dyadic case $q=2$, ignoring the trivial case with $\mathrm P(X_1=0)=1$ or $\mathrm P(X_1=1)=1$, only two different things can happen: if the digits $0$ and $1$ are equally likely, then $F$ is the uniform CDF (on $[0,1]$); otherwise, $F$ is singular  (i.e., $F$ is non-constant and differentiable almost everywhere with $F'(x)=0$), continuous, and strictly increasing on $[0,1]$, cf.\ \cite{Takacs1978,Riesz1955,Salem1943}. 
In the triadic case $q=3$, if $0$ and $2$ are equally likely and $\mathrm P(X_1 = 1) = 0$, then $F$ is the Cantor function, cf.\ Problem 31.2 in \cite{Billingsley1995}. This function is also singular continuous, but piecewise constant and only increasing on the Cantor set. In fact, interestingly, the measures $\mathrm d F$ in all the Bernoulli schemes for any $q$ are again all singular with respect to one another (this seems to be a folk theorem but see Section 14 in \cite{Billingsley1965}) and only one is absolutely continuous relative to Lebesgue measure and that is the one where all $j \in \{0,\dots, q-1\}$ are equally likely.  In the latter case, $\mathrm d F$ is Lebesgue measure itself on $[0,1]$.

Harris in \cite{Harris1955} considered the case where $q\geq 2$ and  $\{X_n\}_{n\geq 1}$ is stationary and of a mixing type. He showed that either $F$ is the uniform CDF, or $F$ has a single jump of magnitude $1$ at one of the points ${k}/{(q-1)}$, $k=0,\dots,q-1$, or $F$ is singular continuous. A similar result has been shown in \cite{Dym1968} under the assumption that $\{X_n\}_{n\geq 1}$ is stationary and ergodic, namely that either $F$ is the uniform CDF, or $F$ has $k$ jumps of magnitude $k^{-1}$, or $F$ is singular continuous.

It is well-known that if the $X_n$'s are IID, then $F$ is the uniform CDF if and only if $\dd F$ is a Rajchman measure \cite{Salem1943,Reese1989,Hu2001}. Recall that a Rajchman measure is a finite measure whose characteristic function $\mathrm{E} \mathrm{e}^{itX}$  tends to zero as $t \to\pm\infty$. Rajchman measures have received much attention in the Fourier analysis community (see the review article \cite{RLyons}) and the behavior of the characteristic function of singular continuous probability measures at infinity are of general interest in quantum mechanics (see \cite{BCM1997} and references therein). To the best of our knowledge it has yet not been clarified in the literature if 
under stationarity and when $F$ is not the uniform CDF on $[0,1]$ there is a case where
$\mathrm d F$ is a Rajchman measure.

\subsection{Our results}

In this paper, Theorem~\ref{HC1} provides a complete characterization of stationarity in terms of a functional equation for $F$ without using the extra assumptions of \cite{Harris1955} and \cite{Dym1968}. This leads to Theorem \ref{fourier} which characterizes stationarity 
in terms of the characteristic function of $X$, and the asymptotic behavior of the characteristic function at $\pm \infty$ is treated in the stationary case. 
In particular, we show that none of the measures $\dd F$ arising are Rajchman measures, except when $\dd F$ is Lebesgue measure on $[0,1]$. Furthermore, Theorem~\ref{thm1} describes stationarity in terms of a Lebesgue decomposition result for $F$. Here, it is known that the absolutely continuous component of $F$ can only be the uniform CDF (see Theorems~1 and 2 in \cite{parry}), but we give a simpler proof (see Proposition~\ref{prop19-1}). In addition, we prove that the atomic component of $F$ can only be a countable convex combination of certain explicitly computable CDFs for probability distributions with finite support.

For ease of presentation, the proofs of our theorems and propositions are deferred to Section~\ref{s:proofs}. Moreover, Section~\ref{s:proofs} provides an interesting example of a function not belonging to $L^1$ (Example~\ref{ex:7}) and another interesting example of a non-measurable function (Example~\ref{ex:8}) both of which satisfy an important requirement (but not all requirements)
of a putative probability density function for $X$ in the stationary case.

\subsection{Future work}\label{s:future}
From Theorem~\ref{thm1} it is reasonable to expect that many well-known stationary stochastic processes with a finite state space correspond to singular continuous $F$. Assuming that the $X_n$'s only take values $0$ and $1$, a natural generalization of \eqref{e:X} would be to consider 
\begin{equation*}
    X=\sum_{n=1}^\infty X_n \lambda^n,
\end{equation*}
where $\lambda\in (0,1)$. 
Let $\nu_\lambda$ denote the probability distribution of the affine transformation 
\[\sum_{n=1}^\infty (2X_n-1) \lambda^{n-1}=(2/\lambda)X-1/(1-\lambda).\] 
If the $X_n$'s are IID, $\nu_{\lambda}$ is 
a Bernoulli convolution. This is a much studied case in the literature, and in particular the absolute continuity or singularity and the Hausdorff dimension of the support of $\nu_\lambda$ as a function of $\lambda$ have been of interest (see \cite{Yuval,Varju} and references therein). We leave it as an open problem to study the absolute continuity or singularity of $\nu_\lambda$ in the more general case where $\{X_n\}_{n\geq 1}$ is stationary and $1/\lambda$ is not an integer (the present paper covers only the case where $q=1/\lambda$ is an integer). Also, in the stationary case and for any $\lambda\in (0,1)$, it would be interesting to study the Hausdorff dimension of the support of $\nu_\lambda$.


{ Define the function $f(x_1,x_2,\ldots)\coloneqq\sum_{n\geq 1} x_n\lambda^n=x$ with $x_1,x_2,\ldots\in \{0,1\}$, and for $m=1,2,\ldots$, consider the $2^m$ closed intervals of length $\lambda^{m+1}/(1-\lambda)$ and having left end points $\sum_{n=1}^m x_n\lambda^n$ with $x_1,\ldots,x_m\in \{0,1\}$. These $2^m$ intervals cover the range of $f$, which contains the state space of $X$. For $0<\lambda<1/2$, the $2^m$ intervals are disjoint, so it follows that $f$ is injective and the range of $f$ has  Lebesgue measure zero, since $2^m\lambda^{m+1}/(1-\lambda)\rightarrow0$ as $m\rightarrow\infty$. Consequently, if $0<\lambda<1/2$, the CDF of $X$ is purely singular (no matter if $\{X_n\}_{n\ge1}$ is stationary or not). Thus the case with $1/2 < \lambda < 1$ is more interesting and difficult, but a good starting point could be to study the stationary case.}


In a follow up paper we will consider the categorization of Markov chain models, renewal processes, and mixtures of these in terms of the Lebesgue decomposition of the corresponding $F$. Furthermore, in some examples of that paper we will derive closed form expressions for $F$.

\section{Main results}\label{s:main results} 

\subsection{Characterization of stationarity by a functional equation for $F$}\label{s:2.1}
Recall that
any number $x\in[0,1]$ has a base-$q$ expansion $x=(0.x_1x_2\ldots)_q$ with $x_1,x_2,\ldots\in\{0,\ldots,q-1\}$. This expansion is unique except when  
$x$ is a \emph{base-$q$ fraction} in $(0,1)$, that is, when for some (necessarily unique) $n\in\mathbb N$ we have either $x_n<x_{n+1}=x_{n+2}=\ldots=q-1$ or 
$x_n>x_{n+1}=x_{n+2}=\ldots=0$; we refer to $n$ as the \emph{order of $x$} { and denote the set of all base-$q$ fractions in $(0,1)$ by $\mathbb{Q}_q$}.

Here is the first main result of our paper. 

\begin{theorem}\label{HC1} We have the following. 
	\begin{enumerate} 
		\item[{\rm (I)}] Stationarity of $\{X_n\}_{n\geq 1}$ holds if and only if for all { $x \in \mathbb{Q}_q$} we have
		\begin{equation}\label{e:3}
		F(x) = F(0)+\sum_{j=0}^{q-1} [F((x+j)/q)-F(j/q)].
		\end{equation}
		\item[{\rm (II)}] 
		Suppose that $\tilde F$ is a CDF for a probability distribution on $[0,1]$ such that $\tilde F$ satisfies \eqref{e:3} for all { $x \in \mathbb{Q}_q$}. 
		Then there exists a unique stationary stochastic process $\{\tilde X_n\}_{n\geq 1}$ on $\{0,\ldots,q-1\}$ so that $(0.\tilde X_1\tilde X_2\ldots)_q$ follows $\tilde F$. Furthermore, $\tilde F$ is continuous at all { $x \in \mathbb{Q}_q$}, and $\tilde F$ satisfies the functional equation in \eqref{e:3} for all $x\in[0,1]$ (not just for { $x \in \mathbb{Q}_q$}).
	\end{enumerate}
\end{theorem}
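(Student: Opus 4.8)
\emph{The plan.} The engine of the whole argument is the shift. I would introduce the shifted variable $Y:=\sum_{n\ge1}X_{n+1}q^{-n}$ with CDF $G$, and use the surely-valid identity $qX=X_1+Y$. First I would show that, for \emph{any} digit process and any $x\in(0,1)$,
$F((x+j)/q)-F(j/q)=\rmP(j/q<X\le(x+j)/q)=\rmP(j<X_1+Y\le x+j)$. Decomposing over the value of $X_1$ and using $0\le Y\le1$, the constraint $j<i+Y\le x+j$ is unsatisfiable for $i\ne j$, so only $i=j$ survives and the $j$-th term equals $\rmP(X_1=j,\,0<Y\le x)$. Summing over $j$ telescopes to $\rmP(0<Y\le x)=G(x)-G(0)$, so the right-hand side of \eqref{e:3} equals $F(0)+G(x)-G(0)$ for every $x\in(0,1)$. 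Hence \eqref{e:3} at $x$ is \emph{equivalent} to $F(x)-G(x)=F(0)-G(0)$. The ``only if'' part of (I) is then immediate: stationarity means $\{X_n\}_{n\ge1}$ and $\{X_{n+1}\}_{n\ge1}$ are equal in law, so applying the map $\{x_n\}\mapsto\sum x_nq^{-n}$ gives $X\stackrel{\mathrm d}{=}Y$, whence $F=G$, $F(0)=G(0)$, and \eqref{e:3} reduces to $F(x)=F(x)$.

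For the ``if'' part of (I), assume \eqref{e:3} holds on $\mathbb{Q}_q$. By the reduction above, $F-G$ equals the constant $c:=F(0)-G(0)$ on the dense set $\mathbb{Q}_q$. I would pin down $c$ from the endpoints: by definition $c=\rmP(X=0)-\rmP(Y=0)$, while letting $x\uparrow1$ along $\mathbb{Q}_q$ and comparing left limits gives $c=\rmP(Y=1)-\rmP(X=1)$. Since $\{X=0\}\subseteq\{Y=0\}$ and $\{X=1\}\subseteq\{Y=1\}$ (the event for $X$ merely fixes the first digit in addition), the first expression is $\le0$ and the second is $\ge0$, forcing $c=0$. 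Thus $F=G$ on $\mathbb{Q}_q$, and right-continuity of CDFs together with density of $\mathbb{Q}_q$ upgrades this to $F=G$ on all of $[0,1]$, i.e. $X\stackrel{\mathrm d}{=}Y$.

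Equality in law of $X$ and $Y$ is weaker than stationarity, so the crux is to recover the law of the \emph{process} from that of $X$, and for this I would prove a statement about CDFs alone: if a CDF $H$ on $[0,1]$ satisfies \eqref{e:3} on $\mathbb{Q}_q$, then (i) \eqref{e:3} holds for all $x\in[0,1]$, and (ii) $H$ is continuous at every base-$q$ fraction. Part (i) follows because the right-hand side of \eqref{e:3}, as a function of $x$, is nondecreasing and right-continuous, agrees with $H$ on the dense set $\mathbb{Q}_q$ and (by telescoping) at the endpoints, hence agrees everywhere. For (ii), comparing the jump of $H$ at $\alpha$ with the jump of the right-hand side yields the self-similar relation $J_H(\alpha)=\sum_{j=0}^{q-1}J_H((\alpha+j)/q)$, where $J_H$ is the jump. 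Since the children $(\alpha+j)/q$ of an order-$n$ base-$q$ fraction are order-$(n+1)$ base-$q$ fractions, iterating shows that the jump at $\alpha$ equals the total jump over its $q^k$ distinct descendants of order $n+k$, for \emph{every} $k$. As these atoms are pairwise distinct across all generations, a nonzero jump at $\alpha$ would make the total atomic mass of $H$ infinite, contradicting $H(1)-H(0^-)=1$; hence $J_H(\alpha)=0$. I expect step (ii) to be the main obstacle: the jump identity is easy to write down, but one must argue carefully that the descendants are genuinely distinct atoms across all generations so that finiteness of the total mass bites.

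Finally I would assemble the pieces. Continuity at base-$q$ fractions gives $\rmP(X\in\mathbb{Q}_q)=0$, so the map $\Psi$ sending a point of $[0,1]$ with a unique base-$q$ expansion to its digit sequence is defined almost surely and is measurable, and $\{X_n\}_{n\ge1}=\Psi(X)$ almost surely; thus the law of $\{X_n\}_{n\ge1}$ is the pushforward $\Psi_*\,\dd F$. Since $G=F$ is also continuous at base-$q$ fractions, the shifted process has law $\Psi_*\,\dd G=\Psi_*\,\dd F$, so the two laws coincide, which is exactly stationarity and completes (I). For (II), the CDF-only lemma supplies the asserted continuity and the extension of \eqref{e:3} to all of $[0,1]$; existence of $\{\tilde X_n\}_{n\ge1}$ follows by taking $\tilde X\sim\tilde F$, reading off its (almost surely unique) digits, and invoking part (I) to conclude the resulting process is stationary; and uniqueness follows because any stationary process with digit-CDF $\tilde F$ has law $\Psi_*\,\dd\tilde F$, which depends only on $\tilde F$. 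A secondary technical point here is the measurability of $\Psi$ and the careful handling of the probability-zero exceptional set $\mathbb{Q}_q$ when identifying the process law with the pushforward.
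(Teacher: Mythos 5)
Your proof is correct, but its engine is genuinely different from the paper's. The paper's forward direction needs a stationarity-specific zero-probability lemma (Lemma~\ref{lem:horia1}) in order to identify each term $\rmP(X_1=j,\,X\le(x+j)/q)$ with the increment $F((x+j)/q)-F(j/q)$, and its converse establishes stationarity by computing finite-dimensional distributions directly, showing $\mathcal{F}_1(t_1,\dots,t_n)=F(t+q^{-n})=\mathcal{F}_2(t_1,\dots,t_n)$. You instead prove the process-independent identity that the right-hand side of \eqref{e:3} equals $F(0)+G(x)-G(0)$ for every $x\in(0,1)$, where $G$ is the CDF of the shifted value $Y=qX-X_1$ (the cross terms $i\ne j$ vanish because $0\le Y\le1$ and $0<x<1$), so that \eqref{e:3} becomes precisely the statement that $F-G$ is constant. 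This makes the forward implication immediate with no analogue of Lemma~\ref{lem:horia1} (you never convert $\rmP(X_1=j,\,0<Y\le x)$ back into increments of $F$), and it reduces the converse to the single distributional identity $F=G$; your endpoint argument, using $\{X=0\}\subseteq\{Y=0\}$ and $\{X=1\}\subseteq\{Y=1\}$ to force the constant to vanish, has no counterpart in the paper. The two proofs then reconverge on the remaining ingredients: continuity at base-$q$ fractions, which the paper proves by induction on the order of the fraction (Lemma~\ref{lem:horia2}) and you prove by iterating the jump relation $J(\alpha)=\sum_{j}J((\alpha+j)/q)$ and counting mass over the pairwise distinct descendants — incidentally the same counting device the paper deploys later for cycles in Lemma~\ref{lem:b2} — and the treatment of part (II) via the a.s.\ digit map $\Psi$ and the pushforward $\Psi_*\,\dd\tilde F$, which matches the paper's argument with the map $\phi$, including the observation that uniqueness holds because any process with value-CDF $\tilde F$ must have law $\Psi_*\,\dd\tilde F$. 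What your route buys is economy and conceptual sharpness (stationarity is obtained as equality of two pushforward laws rather than by checking all finite-dimensional distributions); what the paper's route buys is the explicit formula $\mathcal{F}_1(t_1,\dots,t_n)=F(t+q^{-n})$ and Lemma~\ref{lem:horia1} itself, both of which are reused elsewhere in the paper, e.g.\ in the proof of Proposition~\ref{prop:indep}.
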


\begin{remark} 
	Functional equations for the characterization of singular functions have been used in various non-probabilistic contexts, cf.\ \cite{Kairies1997}. Our 
	stationarity equation \eqref{e:3} is equivalent to special cases noticed in \cite{Kairies1997}, namely in connection to the de Rham-Tak{\'a}cs' (see the last sentence in Section 5C in \cite{Kairies1997}) and the Cantor function (see the last sentence in Section 5A in
	\cite{Kairies1997}), however, it was not noticed in \cite{Kairies1997} that \eqref{e:3} provides a characterization of stationarity as we show in Theorem~\ref{HC1}.
\end{remark}
\begin{remark}
	Clearly, when the $X_n$ are IID, (2.1) is satisfied, and for the examples of IID $X_n$ as discussed in Section 1.1, $F$ was either the uniform CDF on $[0,1]$ or a singular continuous function. 
	
	Apart from these examples, the best known example of a singular continuous CDF is probably Minkowski's question-mark function (Fragefunktion ?$(x)$) restricted to $[0,1]$, see e.g., \cite{Minkowski1904,Denjoy1932,Denjoy1934,Kairies1997}. Recall that for two reduced fractions $p/q>r/s$ such that $ ps-rq =1$ (i.e., two consecutive Farey fractions), Minkowski's question mark function is recursively defined by 
	\begin{equation*}
	    ?(0/1)=0,\quad ?(1/1)=1, \quad ?\Big(\frac{p+r}{q+s}\Big)=( ?(p/q)+?(r/s))/2,
	\end{equation*}
	and extended to any $x\in [0,1]$ by continuity. As later shown in Corollary~\ref{cor:questionmark}, the ?-function does not satisfy \eqref{e:3} for any $q\geq 2$. Hence, if the ?-function is studied in the framework of \eqref{e:X} and \eqref{e:Fdef}, the process $\{X_n\}_{n\geq 1}$ would not be stationary.
\end{remark}


In the case of a Bernoulli scheme it is well-known that $\mathrm{d}F$ is self-similar in the sense of Hutchinson, see \cite{Hutchinson1981,Strichartz1990,Hu2001}. For completeness we remind the reader that $\mathrm{d}F$ is self-similar if there exist $p_0,\dots, p_n>0$ with $\sum_{j=0}^n p_j=1$ and contractions $S_0,\dots,S_n$ on $\mathbb{R}$ such that
\[ \mathrm{d}F(E)=\sum_{j=0}^n p_j \mathrm{d} F(S_j^{-1}(E)),\]
for all Borel sets $E\subset \mathbb{R}$. 
 When $n=q-1$ and $S_j(x)=(x+j)/q$ for $j=0,\dots,q-1$, we show in the next proposition that self-similarity occurs if and only if the $X_n$'s are IID. 
 \begin{proposition}\label{prop:indep}
 Under stationarity
the $X_n$'s are IID if and only if 
\begin{equation}\label{eq:indep}
     F(x)=\sum_{j=0}^{q-1} \rmP(X_1=j) F(qx-j), \qquad x\in [0,1].
\end{equation}
\end{proposition}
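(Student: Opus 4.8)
The plan is to work throughout under stationarity and to recast \eqref{eq:indep} in measure form. Writing $\mu:=\dd F$, $p_j:=\rmP(X_1=j)$ and $S_j(y):=(y+j)/q$, I note that $F(qx-j)=\big((S_j)_*\mu\big)\big((-\infty,x]\big)$, so \eqref{eq:indep} is equivalent to the fixed-point identity $\mu=T\mu$ with transfer operator $T\nu:=\sum_{j=0}^{q-1}p_j\,(S_j)_*\nu$ acting on probability measures on $[0,1]$; that is, \eqref{eq:indep} asserts precisely that $\dd F$ is invariant under the iterated function system $\{S_j\}$ with weights $p_j$. The deterministic backbone of both implications is the identity $X=(X_1+X')/q$, where $X':=\sum_{n\ge1}X_{n+1}q^{-n}$ is built from the shifted digits; under stationarity one always has $X'\overset{d}{=}X$, while independence of $X_1$ and $X'$ is exactly the extra ingredient supplied by the IID assumption.

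For the forward implication I would assume the $X_n$ are IID, so that $X_1\perp X'$ and $X'\overset{d}{=}X$. Conditioning on the first digit and using independence then gives, for $x\in[0,1]$,
\[ F(x)=\rmP\big((X_1+X')/q\le x\big)=\sum_{j=0}^{q-1}p_j\,\rmP(X'\le qx-j)=\sum_{j=0}^{q-1}p_j\,F(qx-j), \]
the last equality coming from $X'\overset{d}{=}X$. Terms with $p_j=0$ simply drop out, so no conditioning on null events occurs, and \eqref{eq:indep} follows.

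For the converse I would introduce the genuinely IID comparison process $\{Y_n\}_{n\ge1}$ with $\rmP(Y_1=j)=p_j$, set $Y:=\sum_{n\ge1}Y_nq^{-n}$, and let $G$ be its CDF. By the forward implication, $\dd G$ is a fixed point of $T$; by hypothesis so is $\dd F$. The point is that $T$ has a \emph{unique} fixed point: each $S_j$ is $(1/q)$-Lipschitz, so in the Monge--Kantorovich metric $d_{\mathrm{MK}}(T\nu_1,T\nu_2)\le\sum_j p_j\,d_{\mathrm{MK}}((S_j)_*\nu_1,(S_j)_*\nu_2)\le(1/q)\,d_{\mathrm{MK}}(\nu_1,\nu_2)$, a contraction with ratio $1/q<1$ regardless of which $p_j$ vanish (the standard argument of \cite{Hutchinson1981}). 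Hence $\dd F=\dd G$, i.e.\ $X\overset{d}{=}Y$. It then remains to upgrade this equality of one-dimensional laws to equality of the whole digit processes: since both processes are stationary, the almost sure one-to-one correspondence between $\{X_n\}_{n\ge1}$ and $X$ recorded in Section~\ref{s:intro} expresses the law of $\{X_n\}_{n\ge1}$ as the pushforward of the law of $X$ under a fixed measurable inverse, so $X\overset{d}{=}Y$ forces $\{X_n\}_{n\ge1}\overset{d}{=}\{Y_n\}_{n\ge1}$, and the latter is IID.

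The main obstacle I anticipate is exactly this last transfer step, not the analysis that precedes it. The contraction argument delivers only $X\overset{d}{=}Y$, whereas independence of the digits is a statement about the joint law; the map $x\mapsto(x_n)_{n\ge1}$ recovering the digits is one-to-one only off the countable set $\mathbb{Q}_q$ of base-$q$ fractions, where the two competing expansions obstruct a naive pushforward. Making the final implication rigorous thus hinges on the stationarity-specific fact that $X$ determines $\{X_n\}_{n\ge1}$ almost surely -- equivalently, that under \eqref{eq:indep} no mass of $\dd F$ sits ambiguously on $\mathbb{Q}_q$ -- which I would import from the one-to-one correspondence already established under stationarity.
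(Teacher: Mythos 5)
Your proposal is correct, but it takes a genuinely different route from the paper's proof, most visibly in the converse direction. For the forward implication the paper works at base-$q$ fractions: it uses Lemma~\ref{lem:horia1} to discard trailing-zero events, expands $F$ at $(0.x_1\ldots x_n)_q$ as a telescoping sum over digit prefixes, and matches the result with the right-hand side of \eqref{eq:indep} via the case analysis \eqref{eq:benjamin12321}, finishing by right-continuity and density of $\mathbb{Q}_q$; your decomposition $X=(X_1+X')/q$ with $X_1\perp X'$ and $X'\overset{d}{=}X$ reaches the same identity more directly. For the converse the paper stays elementary: after the case split coming from $F(0)=\rmP(X_1=0)F(0)$, it writes $\rmP(X_1=x_1,\dots,X_n=x_n)=F(x+q^{-n})-F(x)$ and applies \eqref{eq:indep} repeatedly to peel off one factor $\rmP(X_1=x_j)$ at a time, factorizing the joint law by hand. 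You instead read \eqref{eq:indep} as saying that $\dd F$ is a fixed point of the Hutchinson operator $T$, invoke uniqueness of that fixed point via the Banach contraction argument in the Monge--Kantorovich metric \cite{Hutchinson1981}, compare with a genuinely IID process $\{Y_n\}$, and transfer $X\overset{d}{=}Y$ to equality of the digit-process laws. The transfer step you flag as the delicate point is indeed legitimate: under stationarity $\rmP(X\in\mathbb{Q}_q)=0$ by Lemma~\ref{lem:horia1}, so both digit processes are almost surely recovered from $X$ and $Y$ by the same Borel map $\phi$; in fact this is precisely the uniqueness assertion of Theorem~\ref{HC1}(II), which you could cite verbatim instead of re-deriving (both $F$ and $G$ satisfy \eqref{e:3}, $F$ by Theorem~\ref{HC1}(I) under stationarity, so that theorem applies). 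As for what each approach buys: the paper's argument is self-contained and uses nothing beyond CDF manipulations, whereas yours imports the IFS fixed-point machinery and the digit-recovery fact, but in exchange it is conceptually cleaner, explains \emph{why} \eqref{eq:indep} pins down the law uniquely, handles the degenerate case $\rmP(X_1=0)=1$ without a separate case split, and ties the proposition to the self-similarity framing that the paper itself introduces just before stating it.
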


As any CDF is differentiable almost everywhere, we next consider the derivative of $F$ when $F$ satisfies \eqref{e:3}. As usual, we let $L^1([0,1])$ be the set of complex absolutely integrable Borel functions defined on $[0,1]$. Note that $F'(x)$ exists outside a set of Lebesgue measure zero $M\subset [0,1]$, and for all $x$ not belonging to 
\[M\cup \{ x\in [0,1]:\; x\in qM-j \textup{ for some } j\in \{0,\dots, q-1\} \}.\]%
it follows { from \eqref{e:3}} that 
\begin{equation*}
    F'(x)=q^{-1} \sum_{j=0}^{q-1} F'((x+j)/q),
\end{equation*}
for almost all $x\in [0,1]$. 
The following Proposition~\ref{prop19-1} shows that $F'$ is almost everywhere on $[0,1]$ equal to a constant $c\in[0,1]$ (with $c=1$ if and only if $F$ is absolutely continuous), and hence that
the only purely absolutely continuous $\mathrm dF$ is Lebesgue measure. Proposition~\ref{prop19-1} is a consequence of Theorems 1 and 2 in \cite{parry} where the author considers absolutely continuous measures which are invariant under the transformation $T(x)=\beta x
\pmod 1$ where $\beta>1$. In Section~\ref{s:Proof of prop19-1} we give a proof for Proposition~\ref{prop19-1} which is simpler than the one in \cite{parry} due to the fact that $q$ is integer. 
\begin{proposition}\label{prop19-1}
	Let $f\in L^1([0,1])$ such that for almost all $x\in [0,1] $ (with respect to Lebesgue measure),
	\begin{equation}\label{e:f-fundamental}
	f(x)=q^{-1}\sum_{j=0}^{q-1}f((x+j)/q).
	\end{equation} 
	Then $f$ is almost everywhere a (complex) constant equal to $\int_0^1 f(x)\,\dd x$. 
\end{proposition}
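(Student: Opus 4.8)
The plan is to pass to Fourier coefficients on the circle $\R/\mathbb{Z}$ and show that the functional equation \eqref{e:f-fundamental} forces every nonzero Fourier mode of $f$ to vanish. For $k\in\mathbb{Z}$ set
\[ c_k:=\int_0^1 f(x)\,\mathrm{e}^{-2\pi\mathrm{i}kx}\,\dd x,\]
which is well defined because $f\in L^1([0,1])$. Since the zeroth coefficient $c_0=\int_0^1 f(x)\,\dd x$ is exactly the constant the proposition asserts, it suffices to prove $c_k=0$ for all $k\neq 0$ and then invoke uniqueness of Fourier coefficients for $L^1$ functions on the torus, which gives that $f$ agrees almost everywhere with $c_0$.

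The key idea is to read \eqref{e:f-fundamental} as a self-similarity relation on the coefficients $c_k$. First I would substitute the right-hand side of \eqref{e:f-fundamental} into the Fourier integral (legitimate since the two sides agree almost everywhere). For each $j\in\{0,\dots,q-1\}$ the change of variables $u=(x+j)/q$ maps $[0,1]$ onto $[j/q,(j+1)/q]$, and the shift produces a phase $\mathrm{e}^{2\pi\mathrm{i}kj}$ which equals $1$ precisely because $q$, and hence $j$ and $k$, are integers. Summing over $j$ the $q$ subintervals tile $[0,1]$, and the frequency is multiplied by $q$. The outcome is the identity
\[ c_k=c_{qk},\qquad k\in\mathbb{Z}.\]
It is exactly at this point that integrality of $q$ is essential: if $q$ were not an integer, then $qk$ would not be an admissible integer frequency on $\R/\mathbb{Z}$ and the relation would be meaningless — this is what makes the present argument simpler than the general one in \cite{parry}.

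Iterating $c_k=c_{qk}$ yields $c_k=c_{q^mk}$ for every $m\geq 0$. For fixed $k\neq 0$ we have $|q^mk|\to\infty$ as $m\to\infty$, so the Riemann--Lebesgue lemma (applicable since $f\in L^1$) gives $c_{q^mk}\to 0$, forcing $c_k=0$. Hence all nonzero Fourier coefficients of $f$ vanish, and uniqueness of Fourier coefficients identifies $f$ almost everywhere with the constant $c_0=\int_0^1 f(x)\,\dd x$, as claimed.

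The argument is essentially routine once the reduction to Fourier coefficients is in place, so there is no serious obstacle; the only step requiring genuine care is the change-of-variables bookkeeping that establishes $c_k=c_{qk}$ — in particular verifying that the phase $\mathrm{e}^{2\pi\mathrm{i}kj}$ is trivial and that the $q$ subintervals exactly reconstitute the integral over $[0,1]$ at frequency $qk$. The two ingredients that close the argument, namely Riemann--Lebesgue decay and uniqueness of Fourier coefficients, are both standard consequences of $f\in L^1([0,1])$.
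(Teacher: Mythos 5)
Your proof is correct, but it takes a genuinely different route from the paper's. Your key step does hold: substituting \eqref{e:f-fundamental} into the Fourier integral is legitimate since each map $x\mapsto f((x+j)/q)$ lies in $L^1([0,1])$, the phase $\mathrm{e}^{2\pi \mathrm{i} kj}$ is trivial because $k$ and $j$ are integers, and the images $[j/q,(j+1)/q]$ tile $[0,1]$, yielding $c_k=c_{qk}$; iterating and invoking the Riemann--Lebesgue lemma then annihilates every $c_k$ with $k\neq 0$, and uniqueness of Fourier coefficients for $L^1$ functions on the torus identifies $f$ with $c_0$ almost everywhere. The paper argues instead in real space: it first iterates \eqref{e:f-fundamental} $n$ times, carefully intersecting the full-measure sets $A_n$ on which each iterate is valid, to obtain $f(x)=q^{-n}\sum_{j=0}^{q^n-1}f((x+j)/q^n)$ almost everywhere, and then compares $f$ with its mean $\mathcal{I}=\int_0^1 f(t)\,\dd t$ by approximating $f$ in $L^1$ with a uniformly continuous $g_\varepsilon$ and letting $n\to\infty$, which forces $\int_0^1|f-\mathcal{I}|\,\dd x\leq\varepsilon$ for every $\varepsilon>0$. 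Your approach buys brevity and entirely sidesteps the null-set bookkeeping (integration against $\mathrm{e}^{-2\pi\mathrm{i}kx}$ never sees the exceptional sets), at the price of invoking two standard but nontrivial harmonic-analysis facts; the paper's argument is more self-contained, needing only density of continuous functions in $L^1$. It is also worth noticing that your relation $c_k=c_{qk}$ is exactly the density-level analogue of the characterization $\tilde f(2\pi kq)=\tilde f(2\pi k)$ in Theorem~\ref{fourier}(I), and your iterate-then-use-decay step mirrors the paper's own proof of Theorem~\ref{fourier}(II), where $g(2\pi kq^m)=g(2\pi k)$ is combined with $g(t)\to0$; so in effect you have specialized that circle of ideas to the absolutely continuous case rather than reproving the proposition by the paper's direct $L^1$ estimate.
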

In Section~\ref{s:Proof of prop19-1}, we construct remarkable examples of functions $f\not\in L^1([0,1])$ where \eqref{e:f-fundamental} is satisfied but in one case $f$ is not absolutely integrable and in another case $f$ is not measurable.

\subsection{Characterization of stationarity by the characteristic function of $X$}

Next, we characterize stationarity of $\{X_n\}_{n\geq 1}$ in terms of the characteristic function of $X$ given by
\[f(t):= \int \mathrm{e}^{itx} \, \dd F(x),\qquad t\in \R.\]
In particular, we discuss when $\dd F$ is a Rajchman measure, meaning that $f(t)\rightarrow 0$ as $t\to \pm\infty$, cf.\ Section~\ref{s:background}.


\begin{theorem}\label{fourier}
	\begin{enumerate}
		\item[(I)] Let $\tilde X$ be a random variable on $[0,1]$ with CDF  $\tilde{F}$ and  characteristic function $\tilde f$. Then $\tilde{F}$ satisfies \eqref{e:3} if and only if for all $k\in \mathbb{Z}$,
		\[\tilde f(2\pi k q)=\tilde f(2\pi k).\]
		
		\item[(II)] If $F$ satisfies \eqref{e:3},
		then  $\lim _{t \to \infty} f(t)$ exists if and only if there exists $c\in [0,1]$ such that for all $x\in [0,1]$, $F(x)=(1-c)x+c H(x)$. In this case, $c=\lim_{t\to \infty} f(t)$.  In particular, $\dd F$ is a Rajchman measure if and only if $F$ is the uniform CDF on $[0,1]$.
		
	\end{enumerate}
\end{theorem}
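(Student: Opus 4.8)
The plan is to read the characteristic function values $f(2\pi k)$, $k\in\mathbb Z$, as the Fourier--Stieltjes coefficients $\hat\mu(k)$ of the measure $\mu:=\dd F$ regarded on the torus $\mathbb R/\mathbb Z$ (legitimate since $\mu$ is carried by $[0,1]$ and $x\mapsto\mathrm e^{2\pi i kx}$ is $1$-periodic), and to use that such coefficients determine a finite measure uniquely. For part~(I), I would prove the forward implication probabilistically: by Theorem~\ref{HC1}, \eqref{e:3} is equivalent to stationarity, and stationarity forces $X\overset{d}{=}TX$ with $TX:=qX\bmod 1=\sum_{n\ge1}X_{n+1}q^{-n}$; evaluating at $t=2\pi k$ and using $\mathrm e^{2\pi i k(qX\bmod 1)}=\mathrm e^{2\pi i kqX}$ gives $\tilde f(2\pi k)=\tilde f(2\pi kq)$. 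For the converse, the hypothesis says exactly that $\mu$ and its pushforward $T_{*}\mu$ have identical coefficients, so $T_{*}\mu=\mu$; invariance of $\mu$ under $T$ is in turn equivalent to stationarity of $\{X_n\}$ (the shift on digit sequences being conjugate to $T$ via the a.s.\ digit map), whence Theorem~\ref{HC1} returns \eqref{e:3}.

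For part~(II) the converse is immediate: if $F(x)=(1-c)x+cH(x)$ then $f(t)=(1-c)(\mathrm e^{it}-1)/(it)+c$, whose first term tends to $0$ by Riemann--Lebesgue, so $\lim_{t\to\infty}f(t)=c$ exists. The substance is the forward direction, and the key device is to iterate part~(I): replacing $k$ by $kq$ repeatedly yields $f(2\pi kq^{n})=f(2\pi k)$ for all $n\ge0$ and $k\in\mathbb Z$. Since $q\ge2$, for fixed $k\ge1$ the arguments $2\pi kq^{n}\to+\infty$, so if $L:=\lim_{t\to\infty}f(t)$ exists then $f(2\pi k)=L$ for every $k\ge1$; using $f(-t)=\overline{f(t)}$ gives $f(2\pi k)=\overline L$ for $k\le-1$. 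Thus $\hat\mu(0)=1$ and $\hat\mu(k)=L$ (resp.\ $\overline L$) for $k\ge1$ (resp.\ $k\le-1$).

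It then remains to recover $\mu$ from these coefficients, which I expect to be the main obstacle. The trick I would use is to pass to the modulated measure $\nu:=(1-\mathrm e^{-2\pi i x})\,\mu$, whose coefficients $\hat\nu(k)=\hat\mu(k)-\hat\mu(k-1)$ vanish for all $k\ne0,1$; hence $\nu$ is a trigonometric--polynomial measure, $\dd\nu=\big[(1-\overline L)+(L-1)\mathrm e^{-2\pi i x}\big]\dd x$, in particular absolutely continuous. Since $1-\mathrm e^{-2\pi i x}\ne0$ on $(0,1)$, dividing shows $\mu$ restricted to $(0,1)$ is absolutely continuous with density $\rho(x)=\big[(1-\overline L)+(L-1)\mathrm e^{-2\pi i x}\big]/(1-\mathrm e^{-2\pi i x})$. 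Finiteness of $\mu$ forbids the non-integrable blow-up of $\rho$ near $x=0$ coming from the numerator value $L-\overline L$ there, which forces $L\in\mathbb R$; the numerator then factors as $(1-L)(1-\mathrm e^{-2\pi i x})$, so $\rho\equiv 1-L$. Consequently $\mu=(1-L)\,\dd x$ on $(0,1)$ together with an atom of total mass $L$ at the torus point $0$, i.e.\ distributed over $x\in\{0,1\}$. A final oscillation argument locates this atom at $0$: writing its mass as $p_0$ at $0$ and $p_1$ at $1$, one has $f(t)=(1-L)(\mathrm e^{it}-1)/(it)+p_0+p_1\mathrm e^{it}$, and convergence of $f(t)$ forces $p_1=0$. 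Hence $\dd F=(1-L)\,\dd x+L\,\delta_0$, that is $F(x)=(1-L)x+LH(x)$ with $c=L\in[0,1]$ by positivity and $c=\lim_{t\to\infty}f(t)$. The Rajchman statement is the special case $L=0$: $\dd F$ is Rajchman if and only if $\lim_{t\to\infty}f(t)=0$, i.e.\ $c=0$, i.e.\ $F(x)=x$.
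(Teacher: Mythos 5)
Your proposal is correct, but it takes a genuinely different route from the paper's proof in both parts. For part (I), the paper never invokes Theorem~\ref{HC1}: it proves both directions analytically, showing that $\tilde f(2\pi kq)=\tilde f(2\pi k)$ for all $k\in\mathbb{Z}$ is equivalent to $\dd\tilde F=\dd\bigl(\sum_{j=0}^{q-1}\tilde F\circ S_j\bigr)$, and the bulk of its effort goes into proving the required uniqueness statement by hand: approximation by trigonometric polynomials, a careful bookkeeping of the atoms at $0$, $1$ and the points $j/q$, and an appeal to Riesz--Markov. You instead route the equivalence through stationarity (Theorem~\ref{HC1}) and the dynamical reformulation that \eqref{e:3} amounts to invariance of $\dd F$ under $T(x)=qx \bmod 1$, then quote the standard uniqueness theorem for Fourier--Stieltjes coefficients on the torus. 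For part (II), the paper first runs a Ces\`aro/Wiener averaging argument, $T^{-1}\int_0^T f(t)\mathrm{e}^{-ita}\,\dd t \to \dd F(\{a\})$, to identify $c=\dd F(\{0\})\in[0,1]$ and continuity of $F$ on $(0,1)$, then subtracts the atom and reuses the part (I) machinery to show the remaining continuous CDF is uniform; you instead pin down all Fourier coefficients by iterating (I), kill the unknown endpoint mass by passing to $\nu=(1-\mathrm{e}^{-2\pi i x})\,\dd F$, identify $\nu$ as a trigonometric-polynomial measure, and recover $L\in\R$ and the location of the atom from integrability/positivity plus an oscillation argument. Your modulation trick is elegant and bypasses the averaging/Fubini step entirely; what the paper's longer route buys is that it establishes, rather than cites, exactly the uniqueness facts your argument rests on.

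The one place where your write-up is thinner than it should be is the passage between $[0,1]$ and the torus, which is precisely where the paper's proof spends its care. Working on $\R/\mathbb{Z}$ glues the endpoints, so $T_*\mu=\mu$ and the digit conjugacy determine $\dd F$ only up to how the mass at the torus point $0$ is split between $x=0$ and $x=1$; to return to \eqref{e:3} you need the (easy) observation that both $H(x)$ and $H(x-1)$ satisfy \eqref{e:3} and that \eqref{e:3} is linear in $F$, so the conclusion is insensitive to that split. Likewise, for the digit map to be an almost-sure conjugacy you must first check that a $T$-invariant measure gives zero mass to every base-$q$ fraction in $(0,1)$ (start from $T^{-1}\{0\}=\{0,1/q,\dots,(q-1)/q\}$ on the torus and induct on the order) --- the torus analogue of Lemma~\ref{lem:horia2}. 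These are fillable details rather than gaps in the approach, but they should be written out.
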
 

By the Riemann-Lebesgue lemma, if $\dd F$ is absolutely continuous relative to Lebesgue measure, then it is also a Rajchman measure. Thus a corollary to Theorem~\ref{fourier}(II) is that if $F$ is a CDF satisfying \eqref{e:3}, then $\dd F$ is absolutely continuous with respect to Lebesgue measure if and only if $F$ is the uniform CDF on $[0,1]$. But a more direct argument for this is to apply Proposition~\ref{prop19-1}, see the beginning of Section~\ref{s:proofSecondThm}.

Salem in \cite{Salem1943} asked whether the measure $\dd ?$ corresponding to Minkowski's question-mark function restricted to $[0,1]$ is a Rajchman measure. It has recently been shown that $\dd ?$ is indeed a Rajchman measure \cite{JordanSahlsten,Persson}. Combining this fact with Theorem~\ref{fourier}(II) we obtain the following corollary.

\begin{corollary}\label{cor:questionmark}
	Minkowski's question-mark function does not satisfy \eqref{e:3} for any integer $q\geq 2$. {In particular, if Minkowski's question-mark function equals the CDF of a random variable as in \eqref{e:X}, then the stochastic process $\{X_n\}_{n \geq 1}$ cannot be stationary.}
\end{corollary}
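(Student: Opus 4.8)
The plan is to argue by contradiction, combining the two ingredients flagged just before the statement: the recently established fact that $\dd ?$ is a Rajchman measure \cite{JordanSahlsten,Persson}, and the sharp dichotomy in Theorem~\ref{fourier}(II). First note that Minkowski's question-mark function restricted to $[0,1]$ is continuous and strictly increasing from $?(0)=0$ to $?(1)=1$, hence it is a bona fide CDF and $\dd ?$ is a probability measure on $[0,1]$; this is what makes Theorem~\ref{fourier}(II) applicable to $F={?}$ once we assume \eqref{e:3}.

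Next I would suppose, toward a contradiction, that $?$ satisfies the functional equation \eqref{e:3} for some integer $q\geq 2$. Under this hypothesis Theorem~\ref{fourier}(II) applies with $F={?}$, and its concluding assertion is that $\dd F$ is a Rajchman measure if and only if $F$ is the uniform CDF on $[0,1]$. Feeding in the cited fact that $\dd ?$ is Rajchman, the forward implication forces $?$ to coincide with the uniform CDF on $[0,1]$. Since the Rajchman property of $\dd ?$ is independent of $q$, this conclusion is reached uniformly for every $q\geq 2$, which is why the single contradiction argument settles the claim ``for any integer $q\geq 2$''.

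I would then exhibit the contradiction by recalling that $?$ is singular continuous, i.e.\ its derivative vanishes Lebesgue-almost everywhere \cite{Denjoy1932,Denjoy1934,Kairies1997}, whereas the uniform CDF has derivative $1$ almost everywhere; even more elementarily, the direct evaluation $?(1/3)=1/4\neq 1/3$ (from $1/3=(0+1)/(1+2)$ and $?(1/2)=1/2$) already shows $?$ is not the uniform CDF. Either way the standing hypothesis is untenable, so $?$ cannot satisfy \eqref{e:3} for any $q\geq 2$, which is the first assertion.

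For the ``in particular'' clause I would invoke Theorem~\ref{HC1}(I): if $?$ were the CDF of a random variable of the form \eqref{e:X} with stationary digits $\{X_n\}_{n\geq 1}$, then $?$ would be forced to satisfy \eqref{e:3} for all $x\in\mathbb{Q}_q$, contradicting what has just been shown; hence the process cannot be stationary. I do not anticipate a genuine obstacle, since all of the analytic work has already been carried out in the cited Rajchman result and in Theorem~\ref{fourier}(II); the only point deserving a word of care is verifying that Theorem~\ref{fourier}(II) may legitimately be invoked for $?$, namely that $?$ is an admissible CDF and that the theorem's standing hypothesis \eqref{e:3} is exactly the contradiction assumption, both of which are immediate.
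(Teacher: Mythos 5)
Your proof is correct and follows essentially the same route as the paper, which likewise obtains the corollary by combining the cited Rajchman property of $\dd ?$ \cite{JordanSahlsten,Persson} with the dichotomy in Theorem~\ref{fourier}(II) and the fact that $?$ is not the uniform CDF (being singular continuous), with the ``in particular'' clause following from Theorem~\ref{HC1}. Your added details -- verifying that $?$ is a genuine CDF, the elementary evaluation $?(1/3)=1/4\neq 1/3$, and the explicit appeal to Theorem~\ref{HC1}(I) -- merely flesh out what the paper leaves implicit.
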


\subsection{Characterization of stationarity by a decomposition result for $F$}\label{s:2.2}

\hspace{-3pt}The theorem below characterizes stationarity of $\{X_n\}_{n\geq 1}$ by properties of each part of the Lebesgue decomposition of $F$. 
It is a generalization of results obtained in \cite{Harris1955} and \cite{Dym1968};
our proof in Sections~\ref{s:proofSecondThm} is based on Theorem~\ref{HC1}, Proposition~\ref{prop19-1}, Theorem~\ref{fourier}, and a technical result (Lemma~\ref{lem:b2}).

We need the following notation and concepts. 

We call $s\in[0,1]$ a \emph{purely repeating base-$q$ number of order $n$} if the base-$q$ expansion of $s$ is of the form
\begin{equation}\label{may21}
s=(0.\overline{t_1\dots t_n})_q:=(0.t_1 \ldots t_n t_1\ldots t_n\ldots)_q 
=\sum_{j=1}^n t_jq^{-j}/\left(1-q^{-n}\right),
\end{equation}
where $n$ is {the smallest possible positive integer} and $t_1,\ldots,t_n\in\{0,\ldots,q-1\}$.
A purely repeating base-$q$ number cannot be a base-$q$ fraction; if $S_j(x):=(x+j)/q$ then the purely repeating number $(0.\overline{t_1\dots t_n})_q$ is the unique fixed point of the function $S_{t_1}\circ\dots \circ S_{t_n}$.
For any $n\in\mathbb N$ we call $(s_1,\ldots,s_n)$ a \emph{cycle of order $n\in\mathbb N$} if for some integers $t_1,\dots,t_n\in \{0,\ldots,q-1\}$,
\begin{equation}\label{e:sequence}
s_1=(0.\overline{t_1t_2\dots t_n})_q,\	s_2=(0.\overline{t_nt_1\dots t_{n-1}})_q,\ \dots,\ s_{n}=(0.\overline{t_2\dots t_nt_1})_q,
\end{equation}
and $s_1,\dots,s_n$ are pairwise distinct. Note that for two cycles $(s_1,\dots,s_n)$ and $(s_1',\dots,s_m')$, the sets { $\{s_1,\dots, s_n\}$ and $\{s_1',\dots, s_m'\}$} are either equal or disjoint. Table~\ref{tab:b1} shows the cycles up to the order of elements  
for $q=2,3$ and $n=1,2,3$. 
Moreover, let $H$ be the Heaviside function defined by $H(x)=0$ for $x<0$ and $H(x)=1$ for $x\ge0$. Finally, we say that $F$ is a mixture of an at most countable number of CDFs if there exist CDFs $F_1,F_2,\ldots$ and a discrete probability distribution $(\theta_1,\theta_2,\ldots)$ such that $\tilde F=\sum_{i}\theta_iF_i$.

\begin{table}
	\centering
	\bgroup
	\def\arraystretch{1.5}
	\begin{tabular}{l l l}
		&	$q=2$	&	$q=3$\\
		\hline
		$n=1$	& $ 0, 1 $& $ 0, \frac{1}{2}, 1 $\\
		\hline
		$n=2$	& $ (\frac{1}{3},\frac{2}{3}) $& $ (\frac{1}{8},\frac{3}{8}), (\frac{2}{8},\frac{6}{8}), (\frac{5}{8},\frac{7}{8}) $\\
		\hline
		$n=3$	& $ (\frac{1}{7},\frac{2}{7},\frac{4}{7}),(\frac{3}{7},\frac{5}{7},\frac{6}{7})$& \begin{tabular}[c]{@{}c@{}}
			$ (\frac{1}{26},\frac{3}{26},\frac{9}{26}), (\frac{2}{26},\frac{6}{26},\frac{18}{26}), (\frac{4}{26},\frac{12}{26},\frac{10}{26}),(\frac{5}{26},\frac{15}{26},\frac{19}{26}),$\\
			$(\frac{7}{26},\frac{21}{26},\frac{11}{26}),(\frac{8}{26},\frac{24}{26},\frac{20}{26}),(\frac{14}{26},\frac{16}{26},\frac{22}{26}),(\frac{17}{26},\frac{25}{26},\frac{23}{26})$
		\end{tabular}\\
	\end{tabular}
	\egroup
	\caption{All cycles up to equivalence when $q\in\{2,3\}$ and $n\in\{1,2,3\}$.}
	\label{tab:b1}
\end{table}

\begin{theorem}\label{thm1}
	$F$ satisfies the stationarity equation \eqref{e:3}	if and only if $F$ is a mixture of three CDFs $F_1,F_2,F_3$  whose corresponding probability distributions are mutually singular measures concentrated on $[0,1]$ {such that} $F_1,F_2,F_3$ satisfy the following statements {\rm (I)-(III)}: 
	\begin{enumerate} 
		\item[{\rm (I)}]
		$F_1$ is the uniform CDF on $[0,1]$, that is, $F_1(x)=x$ for $x\in[0,1]$. 
		
		\item[{\rm (II)}] $F_2$ is a mixture
		of an at most countable number of CDFs of the form
		\begin{equation}\label{e:F2type}
		F_{s_1,\ldots,s_n}(x):=\frac{1}{n}\sum_{j=1}^{n} H(x-s_j),\qquad x\in\mathbb R,
		\end{equation}
		where $(s_1,\ldots,s_n)$ is a cycle of order $n$.
	
		\item[{\rm (III)}]
		$F_3$ is singular continuous and satisfies \eqref{e:3}
		(with $F$ replaced by $F_3$). 
	\end{enumerate}
	Moreover, we have:
	\begin{enumerate}
		\item[{\rm (IV)}] $F_1$ and $F_2$ also satisfy \eqref{e:3} (with $F$ replaced by $F_1$ and $F_2$, respectively).  
	\end{enumerate}
\end{theorem}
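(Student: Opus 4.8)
The plan is to reformulate the stationarity equation \eqref{e:3} as invariance of the measure $\dd F$ under the digit shift $T\colon[0,1]\to[0,1]$, $Tx=qx\bmod 1$, and then to exploit the fact that $T$ respects the Lebesgue decomposition. By Theorem~\ref{HC1}(II) I may assume \eqref{e:3} holds for \emph{all} $x\in[0,1]$; taking the difference of \eqref{e:3} at $b$ and at $a$ gives $\dd F((a,b])=\sum_{j=0}^{q-1}\dd F((\tfrac{a+j}{q},\tfrac{b+j}{q}])=\dd F(T^{-1}(a,b])$ for every $0\le a<b\le 1$. Since half-open intervals generate the Borel $\sigma$-algebra, this is equivalent to $\dd F(T^{-1}E)=\dd F(E)$ for all Borel $E$, i.e.\ to $T$-invariance of $\dd F$. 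Both implications will be organized around this equivalence together with the linearity of \eqref{e:3} in $F$.

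For the direction assuming the mixture structure, I first record statement (IV). Equation \eqref{e:3} is affine in $F$ and evaluates $F$ at finitely many points, so it is preserved under any countable convex combination of CDFs that individually satisfy it, the infinite case following from pointwise convergence of $\sum_i\theta_iF_i$. A one-line computation shows the uniform CDF $F_1(x)=x$ satisfies \eqref{e:3}. For a cycle CDF $F_{s_1,\dots,s_n}$ as in \eqref{e:F2type}, the associated probability measure is the normalized counting measure on a single periodic $T$-orbit $\{s_1,\dots,s_n\}$ (cf.\ \eqref{may21}--\eqref{e:sequence}), which is $T$-invariant because $T$ permutes the orbit cyclically; hence $F_{s_1,\dots,s_n}$, and therefore its countable mixture $F_2$, satisfies \eqref{e:3}. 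Combining this with the hypothesis that $F_3$ satisfies \eqref{e:3} (statement (III)), linearity yields that the mixture $F$ satisfies \eqref{e:3}.

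For the converse, suppose $\dd F$ is $T$-invariant and write its Lebesgue decomposition $\dd F=\mu_{\mathrm{ac}}+\mu_{\mathrm d}+\mu_{\mathrm{sc}}$. The key structural step is that the pushforward $T_*$ preserves each Lebesgue type: the inverse branches $S_j(x)=(x+j)/q$ are Lipschitz, so $T^{-1}$ maps Lebesgue-null sets to Lebesgue-null sets, whence $T_*\mu_{\mathrm{ac}}$ is absolutely continuous and $T_*\mu_{\mathrm{sc}}$ remains supported on a null set; moreover $T^{-1}$ of a point is finite and $\mu_{\mathrm{sc}}$ is atomless, so $T_*\mu_{\mathrm{sc}}$ acquires no atoms and stays singular continuous, while $T_*\mu_{\mathrm d}$ is visibly discrete. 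Applying $T_*$ to $\dd F=T_*\,\dd F$ and invoking uniqueness of the Lebesgue decomposition forces $T_*\mu_{\mathrm{ac}}=\mu_{\mathrm{ac}}$, $T_*\mu_{\mathrm d}=\mu_{\mathrm d}$ and $T_*\mu_{\mathrm{sc}}=\mu_{\mathrm{sc}}$, so each component is itself $T$-invariant and each normalized CDF satisfies \eqref{e:3}. The absolutely continuous part then has a density $f\in L^1([0,1])$ satisfying \eqref{e:f-fundamental}, so by Proposition~\ref{prop19-1} $f$ is a.e.\ constant and $\mu_{\mathrm{ac}}$ is a multiple of Lebesgue measure, giving (I); the singular continuous part gives (III) directly; and mutual singularity of the three parts is automatic from the Lebesgue decomposition.

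The remaining and most delicate task is (II), the analysis of the $T$-invariant discrete measure $\mu_{\mathrm d}$, which I expect to be the main obstacle and which I would isolate in Lemma~\ref{lem:b2}. Writing $m_b=\mu_{\mathrm d}(\{b\})$, invariance evaluated on singletons gives $m_b=\sum_{Ta=b}m_a$, so $m_{Tc}\ge m_c$ and the atom mass is non-decreasing along every forward $T$-orbit. Since the total mass is finite there are only finitely many atoms of mass $\ge m_a$, so each forward orbit meets this finite set and is therefore eventually periodic. Writing $m_{c_{i+1}}=m_{c_i}+(\text{mass entering }c_{i+1}\text{ from outside the orbit})$ and summing around a periodic orbit $c_1\to\cdots\to c_n\to c_1$, the two cyclic sums of the $m_{c_i}$ coincide, so every external contribution must vanish and the masses must be constant around the orbit. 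Thus every atom is periodic, the atoms partition into periodic $T$-orbits carrying equal mass at each point, and each such orbit is precisely a cycle $(s_1,\dots,s_n)$ in the sense of \eqref{may21}--\eqref{e:sequence}, since the periodic points of $T$ are exactly the purely repeating base-$q$ numbers (none of which is a base-$q$ fraction, in agreement with the continuity of $F$ there from Theorem~\ref{HC1}). Normalizing, $\mu_{\mathrm d}$ becomes a countable convex combination of the cycle CDFs \eqref{e:F2type}, which is (II); assembling the three normalized components with weights $\theta_i=\mu_i([0,1])$ then completes the characterization.
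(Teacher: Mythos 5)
Your proof is correct, and it takes a genuinely different route from the paper's. The paper never recasts \eqref{e:3} as invariance under the shift $Tx=qx\bmod 1$: it works with the full CDF $F$ throughout --- (I) comes from Proposition~\ref{prop19-1} applied to the a.e.\ derivative of $F$, (II) comes from Lemma~\ref{lem:b2}, which is a statement about the discontinuities of $F$ itself, the fact that each cycle CDF \eqref{e:F2type} satisfies \eqref{e:3} is proved by a characteristic-function computation combined with Theorem~\ref{fourier}(I), and \eqref{e:3} for $F_3$ then follows by subtraction. You instead prove $T_*\,\dd F=\dd F$, push the Lebesgue decomposition through $T_*$, and invoke uniqueness of the three-way decomposition to obtain invariance of each component separately; the cycle CDFs are then handled by the one-line remark that $T$ permutes each cycle, so your argument needs no Fourier analysis at all. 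For the atomic part both proofs pivot on the same identity --- your $m_b=\sum_{Ta=b}m_a$ is the paper's \eqref{eq:8-6-2} --- but the paper runs it \emph{backwards} (pairwise disjointness of the preimage sets $L_0(s),\dots,L_k(s)$ would force total jump mass $>1$; a minimality argument then exhibits $s$ as the unique fixed point of a contraction $S_{i_1}\circ\dots\circ S_{i_n}$, hence purely repeating), while you run it \emph{forwards} (masses non-decrease along forward orbits, so orbits of atoms lie in a finite set and are eventually periodic; cyclic sums kill external inflow and equalize masses). Your organization is cleaner and more self-contained; the paper's buys the explicit digit-level identification of the cycles and reuses machinery it has already built. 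Three points to polish: (i) fix the convention $T(1)=1$, i.e.\ take $T^{-1}(a,b]=\bigcup_{j=0}^{q-1}(\tfrac{a+j}{q},\tfrac{b+j}{q}]$ as the definition of invariance --- with the usual convention $T(1)=0$ the admissible atom at $1=(0.\overline{q-1})_q$ (note $H(x-1)$ satisfies \eqref{e:3}) would not be invariant; (ii) singularity of $T_*\mu_{\mathrm{sc}}$ requires that the \emph{forward} map $T$, not $T^{-1}$, sends Lebesgue-null sets to null sets (equally trivial, $T$ being piecewise affine, but it is the other half of the claim); (iii) ``thus every atom is periodic'' needs one more line: a non-periodic atom's forward orbit enters some periodic orbit, and its last point outside that orbit is a positive external contribution into it, contradicting what the cyclic sum just gave.
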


The CDF 
$F_{s_1,\ldots,s_n}$ given by \eqref{e:F2type} is 
just the empirical CDF at the points in the cycle $(s_1,\ldots,s_n)$. 
Thus the following corollary follows immediately from \eqref{e:F2type}.

\begin{corollary}\label{cor2.4}
	Let $(s_1,\dots, s_n)$ be a cycle of order $n$, defined by $t_1,\ldots,t_n\in\{0,\ldots,q-1\}$ as given in \eqref{e:sequence} and assume that $X$ follows $F_{s_1,\ldots,s_n}$ given by \eqref{e:F2type}.
	\begin{enumerate} 
		\item[{\rm (I)}] If $n=1$, then $X_1=X_2=\ldots=t_1$ almost surely.		
		\item[{\rm (II)}] If $n\ge2$,
		then the distribution of $\{X_m\}_{m\ge1}$ is completely determined by the fact that
		$(X_1,\ldots,X_{n-1})$ is uniformly distributed on
		\begin{equation*}
		\{(t_1,\ldots,t_{n-1}),\, (t_n,t_1,\ldots,t_{n-2}),\, \ldots,\, (t_{2},\ldots,t_n)\},
		\end{equation*} 
		since almost surely $\{X_m\}_{m\ge1}$ is in a one-to-one correspondence to $X$ and 
		\begin{eqnarray*}
			(X_1,\ldots,X_{n-1})=(t_1,\ldots,t_{n-1})\ &\Rightarrow&\ X=s_1=(0.\overline{t_1\ldots t_n})_q\\ 
			(X_1,\ldots,X_{n-1})=(t_n,t_1,\ldots,t_{n-2})\ &\Rightarrow&\ X=s_2=(0.\overline{t_nt_1\ldots t_{n-1}})_q\\ 
			&\vdots&\\
			(X_1,\ldots,X_{n-1})=(t_{2},\ldots,t_n)\ &\Rightarrow&\  
			X=s_n=(0.\overline{t_2\ldots t_{n}t_1})_q.
		\end{eqnarray*}

	\end{enumerate}
\end{corollary}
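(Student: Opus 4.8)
The plan is to argue directly from the definition \eqref{e:F2type}, since $F_{s_1,\ldots,s_n}$ is exactly the CDF placing mass $1/n$ on each of the $n$ points of the cycle, so $X=s_j$ with probability $1/n$ for $j=1,\ldots,n$. First I would record the two facts that make the digit process well defined. Each $s_j$ is a purely repeating base-$q$ number of order $n$, hence (as noted after \eqref{may21}) it is \emph{not} a base-$q$ fraction and therefore has a unique base-$q$ expansion. Consequently, outside the probability-$0$ event $\{X\notin\{s_1,\ldots,s_n\}\}$, the map $X\mapsto\{X_m\}_{m\ge1}$ is a genuine bijection, so the law of $\{X_m\}_{m\ge1}$ is completely governed by which cycle point $X$ equals; and reading off \eqref{e:sequence}, the expansion of $s_j$ is the $n$-periodic string obtained by the corresponding cyclic rotation of $(t_1,\ldots,t_n)$.

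For part (I) this already finishes the argument: when $n=1$ the single cycle point is $s_1=(0.\overline{t_1})_q$, whose expansion is the constant string $t_1t_1t_1\cdots$, so $X=s_1$ almost surely forces $X_1=X_2=\cdots=t_1$ almost surely. For part (II), since $X=s_j$ with probability $1/n$ and the expansion of $s_j$ is deterministic, it suffices to show that the initial block $(X_1,\ldots,X_{n-1})$ already identifies $j$; the displayed implications and the uniform law on the listed $n$-element set then follow at once. Concretely, for each $j$ I would let $\pi_j$ denote the value of $(X_1,\ldots,X_{n-1})$ given $X=s_j$, i.e.\ the first $n-1$ letters of the rotation attached to $s_j$, and check that $\pi_1,\ldots,\pi_n$ are precisely the tuples listed in the statement. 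The content of the corollary is then the claim that $\pi_1,\ldots,\pi_n$ are pairwise \emph{distinct}, which is exactly where the hypothesis that $n$ is the \emph{minimal} period is used.

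This distinctness is the one genuinely non-trivial step, and I expect it to be the main obstacle, since two numbers may share their first $n-1$ base-$q$ digits yet differ later, so geometric distinctness of the $s_j$ does not suffice. I would prove it by contradiction in the language of cyclic words. Writing the period block as $\mathbf t$ and letting $\sigma$ be the shift, the expansions of the cycle points are $\sigma^a(\mathbf t^\infty)$ for $0\le a\le n-1$. Suppose two of them, with shifts $a<b$ and $c:=b-a\in\{1,\ldots,n-1\}$, agree in their first $n-1$ letters; modulo $n$ this says $t_k=t_{k+c}$ for every residue $k$ \emph{except} $k\equiv a$. Viewing $k\mapsto k+c$ as a permutation of $\mathbb Z/n$, whose cycles are the cosets of $\langle c\rangle$ (each of size $\ell=n/\gcd(c,n)\ge2$), the single missing relation lies in the coset of $a$. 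Traversing that coset starting from $a+c$ and going around back to $a$ uses only edges with source $\not\equiv a$, which are all available, and forces $t_{a+c}=t_{a+2c}=\cdots=t_a$; in particular the missing equality $t_a=t_{a+c}$ holds as well.

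Thus $t_k=t_{k+c}$ holds for \emph{all} $k$, i.e.\ $\sigma^c(\mathbf t^\infty)=\mathbf t^\infty$ with $0<c<n$, contradicting the minimality of the period $n$. With distinctness established, the bijection $s_j\leftrightarrow\pi_j$ transfers the uniform law on $\{s_1,\ldots,s_n\}$ to a uniform law on $\{\pi_1,\ldots,\pi_n\}$, and each $\pi_j$ determines $X=s_j$ and hence the entire trajectory $\{X_m\}_{m\ge1}$, which is precisely the assertion of (II).
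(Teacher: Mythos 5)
Your proof is correct, and it follows the same route as the paper: read everything off from \eqref{e:F2type}, which says $X$ is uniform on the $n$ cycle points, and use that purely repeating numbers are not base-$q$ fractions, so each $s_j$ has a unique expansion, namely the corresponding rotation of $(t_1,\ldots,t_n)$ in \eqref{e:sequence}. The difference is one of completeness rather than of method: the paper simply declares that the corollary ``follows immediately'' from \eqref{e:F2type} and never addresses the point you isolate, namely that the length-$(n-1)$ prefixes $\pi_1,\ldots,\pi_n$ of the $n$ rotations are pairwise distinct. This is genuinely part of the content: if $\pi_j=\pi_{j'}$ for $j\neq j'$, the displayed implications would force $s_j=s_{j'}$, contradicting the definition of a cycle, so the statement could not hold without it. Your coset-traversal argument proves it correctly: agreement of the first $n-1$ letters gives $t_k=t_{k+c}$ for every residue $k\not\equiv a\pmod n$, and chaining the available relations around the coset $a+\langle c\rangle$, which has size $\ell=n/\gcd(c,n)\ge 2$, i.e.\ $t_{a+c}=t_{a+2c}=\cdots=t_{a+\ell c}=t_a$ using only source indices $a+jc$ with $1\le j\le \ell-1$ (all $\not\equiv a$, including the boundary case $\ell=2$), recovers the missing relation; hence the word is invariant under the shift by $c$, so its period divides $\gcd(c,n)<n$, contradicting pairwise distinctness of $s_1,\ldots,s_n$ (equivalently, minimality of $n$ in \eqref{may21}). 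So your write-up is, if anything, more complete than the paper's own one-line justification.
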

\begin{remark}
	Corollary~\ref{cor2.4} shows that the stochastic process corresponding to a CDF as in (2.5) is a Markov chain of order $n-1$, but essentially it is equivalent to a uniform distribution on $n$ elements. Thus, a stationary stochastic process $\{X_n\}_{n\ge1}$ corresponding to a mixture of CDFs as in (2.5) will be rather trivial. Hence, by Theorem~\ref{thm1}, it only remains to understand those stationary stochastic processes $\{X_n\}_{n\ge1}$ which generate a singular continuous CDF $F$. This will be the topic of our follow up paper mentioned at the very end of Section 1.   
\end{remark}

\section{Proofs and further results}\label{s:proofs}

\subsection{Proof of Theorem~\ref{HC1}}\label{s:proof first thm}

Before proving Theorem~\ref{HC1}, we need the following two lemmas. 

\begin{lemma}\label{lem:horia1}
	Suppose that $\{X_n\}_{n\geq 1}$ is stationary. Then the probability of all $X_n$ having the same value starting from some $n_0>1$ and at least one $X_m$ having a different value for some $m<n_0$ is zero:
	\begin{equation}\label{horia1}
	\mathrm P\left(\bigcup_{0<m<n_0<\infty}\{X_m\not= X_{n_0}=X_{n_0+1}=\ldots\}\right)=0.
	\end{equation}
\end{lemma}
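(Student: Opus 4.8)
The plan is to first rewrite the event in \eqref{horia1} in a more transparent form, and then exploit stationarity to compare the probability of a constant tail with that of a fully constant sequence. First I would observe that an outcome lies in the union in \eqref{horia1} precisely when the sequence $\{X_n\}_{n\ge1}$ is eventually constant but not identically constant: if $X_m\ne X_{n_0}=X_{n_0+1}=\cdots$ for some $m<n_0$, then the tail takes a constant value, say $a$, from index $n_0$ onwards while some earlier coordinate differs from $a$. Writing $a\in\{0,\dots,q-1\}$ for this eventual value and splitting according to the least index $n_0\ge2$ from which the tail is constant, the event is contained in
\begin{equation*}
\bigcup_{a=0}^{q-1}\ \bigcup_{n_0\ge2}\Big(\{X_n=a\text{ for all }n\ge n_0\}\setminus\{X_n=a\text{ for all }n\ge1\}\Big).
\end{equation*}
Since this is a countable union, it suffices by subadditivity to show that each of these set differences is a null event.

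The key step is the following consequence of stationarity. Iterating the defining property that $\{X_n\}_{n\ge1}$ and $\{X_n\}_{n\ge2}$ are identically distributed — and using that discarding the first coordinate preserves equality in distribution of infinite sequences — I would prove by induction that $\{X_n\}_{n\ge j}$ has the same distribution as $\{X_n\}_{n\ge1}$ for every $j\ge1$. Applying this with $j=n_0$ to the event ``all coordinates equal $a$'' gives
\begin{equation*}
\mathrm P\big(X_n=a\text{ for all }n\ge n_0\big)=\mathrm P\big(X_n=a\text{ for all }n\ge1\big).
\end{equation*}
Because $\{X_n=a\text{ for all }n\ge1\}\subseteq\{X_n=a\text{ for all }n\ge n_0\}$ and the two have equal probability, their difference is null, which is exactly what is required.

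The argument is essentially elementary once the bookkeeping is set up, so the only real obstacle is the careful reduction in the first paragraph: one must verify that every term of the original union over $m<n_0$ is absorbed into the cleaner union over the pairs $(a,n_0)$, and that the indexing genuinely forces $n_0\ge2$ so that at least one differing coordinate $X_m$ with $m<n_0$ exists. Once the event is expressed as a countable union of differences of nested sets of equal probability, the conclusion follows immediately from monotonicity and countable subadditivity of $\mathrm P$.
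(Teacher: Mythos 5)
Your proposal is correct and rests on the same mechanism as the paper's proof: stationarity forces two nested constant-tail events to have equal probability, so their set difference is a null event. The only difference is bookkeeping: you shift all the way back to index $1$, which requires the (easy) induction that stationarity iterates, i.e.\ that $\{X_n\}_{n\ge j}$ and $\{X_n\}_{n\ge 1}$ are identically distributed for every $j\ge 1$, whereas the paper first reduces without loss of generality to $m=n_0-1$ and then needs only the one-step shift, phrased via the law of total probability.
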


\begin{proof} It suffices to verify that for integers $0<m<n_0<\infty$,
	\begin{equation}\label{horia111}
	\mathrm P\left(X_m\not= X_{n_0}=X_{n_0+1}=\cdots\right)=0,
	\end{equation}
	where without loss of generality we may assume that $m=n_0-1$. 
{For any $k\in\{0,\ldots,q-1\}$, we have by the law of total probability for two events that
	\begin{align*}
	\mathrm P(X_{n_0}=X_{n_0+1}=\ldots=k)=&\,\mathrm P( X_{n_0-1}\neq k, X_{n_0}=X_{n_0+1}=\ldots=k) \\
	&+ \mathrm P(X_{n_0-1}=X_{n_0}=\ldots=k).
	\end{align*}
	Then \eqref{horia111} follows, since by stationarity of $\{X_n\}_{n\geq 1}$ we have
	\[\mathrm P(X_{n_0}=X_{n_0+1}=\ldots=k)=\mathrm P(X_{n_0-1}=X_{n_0}=\ldots=k).\] 
	Thereby \eqref{horia1} is verified.} 
\end{proof}

\begin{lemma}\label{lem:horia2}
	If $\tilde{F}$ is the CDF for a probability distribution on $[0,1]$ which obeys \eqref{e:3},
	then $\tilde{F}$ is continuous at { every $x \in \mathbb{Q}_q$}.
\end{lemma}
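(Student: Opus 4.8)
\emph{Proof plan.} Since $\tilde F$ is a CDF (nondecreasing and right-continuous), its only possible discontinuities are jumps, and continuity at a point $y$ is equivalent to the jump $\Delta\tilde F(y):=\tilde F(y)-\tilde F(y^-)$ being zero. Note also that $\Delta\tilde F(y)$ is the mass assigned to $\{y\}$, so the sum $\sum_y\Delta\tilde F(y)$ over all atoms $y$ of $\tilde F$ is at most $1$. The plan is therefore to fix $x\in\mathbb{Q}_q$ and show $\Delta\tilde F(x)=0$ by deriving a self-similar recursion for the jumps from \eqref{e:3} and then running a mass-counting argument.

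First I would produce the recursion. Write $x=k/q^n$ with $0<k<q^n$ and $q\nmid k$, so that $x$ has order $n$. For all sufficiently large $m$ the point $x-q^{-m}=(kq^{m-n}-1)/q^m$ again lies in $\mathbb{Q}_q$, so I may apply \eqref{e:3} to both $x$ and $x-q^{-m}$ and subtract. Using $(x-q^{-m}+j)/q=(x+j)/q-q^{-m-1}$, all the terms $\tilde F(0)$ and $\tilde F(j/q)$ cancel and I am left with
\[
\tilde F(x)-\tilde F(x-q^{-m})=\sum_{j=0}^{q-1}\Big[\tilde F\big((x+j)/q\big)-\tilde F\big((x+j)/q-q^{-m-1}\big)\Big].
\]
Letting $m\to\infty$, the left-hand side converges to $\Delta\tilde F(x)$ and each of the finitely many summands on the right converges to $\Delta\tilde F((x+j)/q)$, which yields the recursion
\[
\Delta\tilde F(x)=\sum_{j=0}^{q-1}\Delta\tilde F\big((x+j)/q\big). \tag{$\star$}
\]

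Next I would iterate $(\star)$. Each $S_j(x):=(x+j)/q$ is again a base-$q$ fraction, so $(\star)$ applies to it, and an $m$-fold iteration gives
\[
\Delta\tilde F(x)=\sum_{j_1,\dots,j_m\in\{0,\dots,q-1\}}\Delta\tilde F\big(S_{j_m}\circ\cdots\circ S_{j_1}(x)\big),
\]
a sum over the $q^m$ pairwise distinct points $(x+j_1+j_2q+\cdots+j_mq^{m-1})/q^m$, each a base-$q$ fraction of order exactly $n+m$ (the last digit of its terminating expansion is the nonzero digit $x_n$). Points of distinct orders are distinct, so the level-$m$ families for $m=1,\dots,M$ are pairwise disjoint, while the total jump within each level equals $\Delta\tilde F(x)$. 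Hence these levels together contribute $M\,\Delta\tilde F(x)$ to the total atomic mass of $\tilde F$; since the latter is at most $1$, we get $M\,\Delta\tilde F(x)\le 1$ for every $M$, forcing $\Delta\tilde F(x)=0$ and thus continuity of $\tilde F$ at $x$.

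I expect the main work to be purely bookkeeping rather than analytic. The interchange of limit and the finite sum in $(\star)$ is immediate, so the delicate points are checking that $x-q^{-m}$ and all iterates $S_{j_m}\circ\cdots\circ S_{j_1}(x)$ genuinely belong to $\mathbb{Q}_q$ (so that \eqref{e:3} is applicable at each stage), and verifying that the level-$m$ points have order exactly $n+m$. This last fact—hinging on $q\nmid k$—is precisely what makes the different levels disjoint and thereby converts the bounded total atomic mass into the contradiction that kills the jump.
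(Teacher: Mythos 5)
Your proof is correct, but it takes a genuinely different route from the paper's. Both arguments begin identically: subtracting \eqref{e:3} evaluated at two nearby base-$q$ fractions and passing to the limit yields the jump recursion $\Delta\tilde F(x)=\sum_{j=0}^{q-1}\Delta\tilde F\bigl((x+j)/q\bigr)$, which is the paper's \eqref{eq:discont} after $\delta\downarrow 0$. From there the paper argues by induction on the order of the fraction: it first applies the recursion at $x=1$ (where \eqref{e:3} holds trivially), so that the jump at $1$ appears on both sides and cancels because $1$ is a fixed point of $S_{q-1}$, forcing the jumps at $1/q,\dots,(q-1)/q$ to vanish; nonnegativity of jumps then propagates continuity from all fractions of order $n$ to those of order $n+1$. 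You instead iterate the recursion and count mass: the $m$-th generation $S_{j_m}\circ\cdots\circ S_{j_1}(x)$ consists of $q^m$ pairwise distinct fractions of order exactly $n+m$ (your bookkeeping that $q$ does not divide $k$ is precisely the needed check), so distinct generations are disjoint, each carries total atomic mass $\Delta\tilde F(x)$, and the bound $M\,\Delta\tilde F(x)\le 1$ for every $M$ kills the jump. All of your intermediate claims (membership of $x-q^{-m}$ and of every iterate in $\mathbb{Q}_q$, the strict growth of the order, the interchange of limit and finite sum) are valid. It is worth noting that your argument is essentially the mass-counting technique the paper deploys later in the proof of Lemma~\ref{lem:b2}: there, the generations $L_m(s)$ may fail to be disjoint, which is exactly how cycles and atoms arise, whereas your observation that a base-$q$ fraction has strictly increasing order under the maps $S_j$, and so can never return to itself, is what makes the count diverge. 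The paper's induction buys a shorter, more elementary proof; your version needs no special treatment of the endpoint $x=1$ and makes transparent the structural reason why atoms of a stationary $F$ can only sit on cycles, tying this lemma conceptually to Theorem~\ref{thm1}(II).
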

\begin{proof} Clearly, \eqref{e:3} is also true for $\tilde{F}(x)$ if $x=1$, so
	using \eqref{e:3} we have for any {$\delta \in \mathbb{Q}_q$} and for any base-$q$ fraction $x\in(\delta,1)$ or for $x=1$ that
	\begin{equation}\label{eq:discont}
	\tilde{F}(x)-\tilde{F}(x-\delta)=\sum_{j=0}^{q-1} [\tilde{F}((j+x)/q)-\tilde{F}((j+x)/q -\delta/q)].
	\end{equation}
	{Now, we prove the lemma by induction, considering first base $q$-fractions of order one.
	Letting $x=1$ gives
	\begin{align*}
	\tilde{F}(1)-\tilde{F}(1-\delta)=&\,\tilde{F}(1)-\tilde{F}(1-\delta/q)+\\
	&\,\sum_{j=0}^{q-2} [\tilde{F}((j+1)/q)-\tilde{F}((j+1)/q -\delta/q)],
	\end{align*} 
	and letting $\delta\downarrow 0$ we see that all the jumps of $\tilde{F}$ at $1/q,..., (q-1)/q$ must be zero, so $\tilde{F}$ is continuous at these points, which are first order base-$q$ fractions. Next let us assume that for a given order $n \geq 1$, $\tilde{F}$ is continuous at all base-$q$ fractions of order $n$. Let $x=(0.k_1k_2\ldots k_n)_q$ with $k_n \neq 0$. By using \eqref{eq:discont} and taking $\delta \downarrow 0$, we obtain that $\tilde{F}$ is continuous at all numbers of the form $(x+j)/q=(0.jk_1,\ldots k_n)_q$ for all $j \in \{0,\ldots,q-1\}$. This shows that $\tilde{F}$ is continuous at all base-$q$ fractions of order $n+1$. This completes the proof.} 
\end{proof}

{Now, to prove Theorem~\ref{HC1} we need the following notation and observations.} Let $(x_1,\ldots,x_n),$ $(t_1,\ldots,t_n)\in\{0,\ldots,q-1\}^n$. We write $(x_1,\ldots,x_n)\le(t_1,\ldots,t_n)$ if $\sum_{k=1}^n x_kq^{-k}\le\sum_{k=1}^n t_kq^{-k}$.
Define $t:=\sum_{j=1}^nt_jq^{-j}$. Note that  $x=\sum_{i=1}^\infty x_i q^{-i}\in [0,1]$ satisfies $x\leq t+q^{-n}$ if and only if one of the following two statements holds true: 
\begin{itemize}
	\item The first $n$ digits of $x$ obey $(x_1,\ldots,x_n)\le(t_1,\ldots,t_n)$ (regardless what the values of the next digits $x_{n+1}, x_{n+2},\ldots$ are).
	\item We have $x_1=t_1,\ldots,x_{n-1}=t_{n-1},x_n=t_n+1,x_{n+1}=x_{n+2}=\ldots=0$.
\end{itemize}
{
Define 
\begin{align*}
\mathcal{F}_1(t_1,\dots,t_n):=&\,\mathrm P((X_1,\ldots, X_n)\leq {(t_1,\dots,t_n)})\\
=&\,\sum_{(x_1,\ldots,x_n)\leq (t_1,\ldots,t_n)}\mathrm P(X_1=x_1,\dots,X_n=x_n).
\end{align*}
Stationarity of $\{X_n\}_{n\geq 1}$ is equivalent to that for every $(t_1,\ldots,t_n)\in \{0,\ldots,q-1\}^n$,  $\mathcal{F}_1(t_1,\ldots,t_n)$ equals  
\[\mathcal{F}_2(t_1,\dots,t_n):=\sum_{(x_2,\ldots,x_{n+1})\leq (t_1,\ldots,t_n)}\mathrm P(X_2=x_2,\dots,X_{n+1}=x_{n+1}),\]
cf.\ Kolmogorov's extension theorem.}
Furthermore, by \eqref{e:X}-\eqref{e:Fdef} we have
\begin{align}\label{e:Ftqn}
& F(t+q^{-n}) 
\nonumber 
=\mathcal{F}_1(t_1,\dots,t_n)+\\
&\mathrm P(X_1=t_1,\ldots,X_{n-1}=t_{n-1}, X_n=t_n+1,X_{n+1}=X_{n+2}=\ldots=0).
\end{align}

\begin{proof}[Proof of Theorem~\ref{HC1}(I)] 
	Assume that $\{X_n\}_{n\geq 1}$ is stationary. Using \eqref{e:Ftqn} together with Lemma~\ref{lem:horia1} we obtain 
	\begin{equation}\label{hdc2}
	F(t+q^{-n})=\mathcal{F}_1(t_1,\dots,t_n).
	\end{equation}
	Furthermore, stationarity of $\{X_n\}_{n\geq 1}$ implies that $X$ and the `left shifted' stochastic variable   $\sum_{n=1}^\infty X_{n+1}q^{-n}=qX-X_1$ are identically distributed. Thus,  
	\begin{equation*}
	F(x)=\mathrm P(qX-X_1\leq x)=\sum_{j=0}^{q-1}\mathrm P(X_1=j,\,X\le (x+j)/q).
	\end{equation*}
	We see that 
	\begin{align*}
	\mathrm P(X_1=0,\,X\le x/q)&=\mathrm P(X=0)+\mathrm P(0<\,X\le x/q)\\
	&=F(0)+(F(x/q)-F(0)).
	\end{align*}
	Further, for $j\in\{1,\ldots, q-1\}$,
	\begin{align*}
	\mathrm P(X_1=j,\,X\le (x+j)/q)=&\,\mathrm P(X_1=j, X_2=X_3=\ldots=0)\\
	&\,+ \mathrm P(j/q<\,X\le (x+j)/q)\\
	=&\,F((x+j)/q)-F(j/q),
	\end{align*}
	where we used \eqref{horia1} in order to get the second identity. This leads to \eqref{e:3}.

	Conversely, assume that $F$ satisfies \eqref{e:3}. Then, since $F$ is right continuous and {$\mathbb{Q}_q$} constitutes a dense subset of $[0,1]$, \eqref{e:3} holds for all $x\in[0,1]$. 
{ Further, by Kolmogorov's extension theorem we can define  the distribution of $\{X_n\}_{n\geq 1}$ on $\{0,\ldots,q-1\}$  by specifying the finite dimensional distribution $\mathcal{F}_1$ of $( X_1,\dots,  X_n)$ for every integer $n\ge1$ in a consistent way, setting
\begin{equation}\label{eq:K1}
    \mathcal{F}_1(t_1,\dots,t_n)=\mathrm{P}((X_1,\dots,X_n)\leq (t_1,\dots,t_n)):=F(t+q^{-n}).
\end{equation}
}
		Furthermore, for any $\epsilon>0$ and {$x_1,\dots, x_n\in \{0,\dots,q-1\}$}, the inequality
	\begin{align}\label{eq:lang}
	&\mathrm{P}(X_1=x_1,\ldots, X_n=x_n,X_{n+1}=\ldots=0)\nonumber\\&\qquad+\mathrm P(X_1=x_1,\ldots,X_{n-1}=x_{n-1}, X_n=x_n-1,X_{n+1}=\ldots=q-1)\nonumber\\
	&\quad\leq \mathrm P(x-\epsilon<X\leq x)=F(x)-F(x-\epsilon)
	\end{align}
	holds and hence by Lemma~\ref{lem:horia2}  the probability of realizing a base-$q$ fraction is 0. Hence the second term in the right hand side of \eqref{e:Ftqn} is 0, and so \eqref{hdc2} holds again true.
	Consequently, for any $n\in\mathbb N$ and $(t_1,\ldots,t_n)\in\{0,\ldots,q-1\}^n$, 
	\begin{align*}
	\mathcal{F}_2(t_1,\dots,t_n)&=\sum_{j=0}^{q-1}\sum_{(x_2,\ldots,x_{n+1})\leq (t_1,\ldots,t_n)}\mathrm P(X_1=j,X_2=x_2,\dots,X_{n+1}=x_{n+1}) \\
	&=\mathrm P(X=0)+\sum_{j=0}^{q-1} \mathrm P(j/q<X\leq j/q + t/q +q^{-n-1})\\
	&=F(0)+\sum_{j=0}^{q-1} (F((t+q^{-n}+j)/q)-F(j/q)) \\
	&=F(t+q^{-n}), 
	\end{align*}
	using in the first identity the law of total probability, in the second that the probability of realizing a base-$q$ point is zero, in the third \eqref{e:Fdef}, and in the last
	\eqref{e:3}.
	Thereby \eqref{eq:K1} gives that $\mathcal{F}_1(t_1,\dots,t_n)=\mathcal{F}_2(t_1,\dots,t_n)$ for every $n\in\mathbb N$ and every $(t_1,\ldots,t_n)\in\{0,\ldots,q-1\}^n$, so
	$\{X_n\}_{n\geq 1}$ is stationary.
\end{proof}

\begin{proof}[Proof of Theorem~\ref{HC1}(II)] Let $\phi(x)= \{x_n\}_{n\ge1}$ be the one-to-one mapping on $[0,1]\setminus\mathbb Q_q$ corresponding to mapping $x$ into its base-$q$ digits $x_1,x_2,\ldots$, that is, $x=\sum_{n=1}^\infty x_nq^{-n}$.   
	Further, let $\tilde F$ be a CDF for a random variable $\tilde X$ on $[0,1]$ 
	such that $\tilde F$ satisfies \eqref{e:3} for all $x\in \mathbb Q_q$. By Lemma~\ref{lem:horia2} and since $\mathbb Q_q$ is countable, we can assume that $\tilde X\not\in\mathbb Q_q$. Then $\tilde X$ is in a one-to-one correspondence to $\{\tilde X_n\}_{n\ge1}:=\phi(\tilde X)$ and $\tilde X=\sum_{n=1}^\infty \tilde X_nq^{-n}$ follows $\tilde F$. We conclude from Theorem~\ref{HC1}(I) that the stochastic process $\{\tilde X_n\}_{n\ge1}$ is stationary. Since the distribution of $\{\tilde X_n\}_{n\ge1}$ is induced by that of $\tilde X$ and the one-to-one mapping $\phi$, let us show that $\{\tilde X_n\}_{n\ge1}$ is the unique (up to its distribution) stationary stochastic process on $\{0,\ldots,q-1\}$ so that $\sum_{n=1}^\infty \tilde X_nq^{-n}$ follows $\tilde F$: if $\{\bar X_n\}_{n\ge1}$ is another stationary stochastic process on $\{0,\ldots,q-1\}$ so that $\sum_{n=1}^\infty \bar X_nq^{-n}$ follows $\tilde F$, then for any event $G$ of sequences $\{x_n\}_{n\ge1}$ so that each $x_n\in\{0,\ldots,q-1\}$ and $\sum_{n=1}^\infty x_nq^{-n}\not\in\mathbb Q_q$, we have
	\[\mathrm P(\{\bar X_n\}_{n\ge1}\in G)=\mathrm P(\tilde X\in\phi^{-1}(G))=\mathrm P(\{\tilde X_n\}_{n\ge1}\in G).\]
	Furthermore, by right continuity of $\tilde F$ and since {$\mathbb{Q}_q$} is dense on $(0,1)$, $\tilde F$ satisfies \eqref{e:3} for all $x\in(0,1)$. Finally, $\tilde F$ obviously satisfies \eqref{e:3} for $x=0$ and $x=1$.
\end{proof}

	\subsection{Proof of Proposition~\ref{prop:indep}}
	Suppose that the $X_n$'s are IID. Since 
	\eqref{eq:indep} holds for $x=1$, $F$ is right continuous, and {$\mathbb{Q}_q$} is dense on $(0,1)$, it suffices to show that \eqref{eq:indep} holds for all { $x \in \mathbb{Q}_q$}. {Let $x=(0.x_1\dots x_n)_q$ with $x_1,\dots,x_n\in \{0,\dots,q-1\}$ where $x_n \neq 0$}. By Lemma~\ref{lem:horia1}, 
	\begin{equation*}
	    \mathrm{P}(X_1=x_1,\dots, X_n=x_n, X_{n+1}=X_{n+2}=\ldots=0)=0.
	\end{equation*}
	Hence, setting $\sum_{j=0}^{-1} \dots =0$,
	\begin{align*}
	    F(x)&=\sum_{j=0}^{x_1-1}\rmP(X_1=j)+\rmP(X_1=x_1)\sum_{j=0}^{x_2-1}\rmP(X_2=j)\\
	    &\quad+\dots+\rmP(X_1=x_1)\sum_{j=0}^{x_n-1} \rmP(X_2=x_2,\dots, X_{n-1}=x_{n-1},X_n=j) \\
	   &=\sum_{j=0}^{x_1-1} \rmP(X_1=j)+\rmP(X_1=x_1)F(qx-x_1),
	\end{align*}
    using that the $X_n's$ are independent in the first identity and identically distributed in the second identity.
    Furthermore, for $j\in \{0,\dots,q-1\}$,
{
\begin{equation}\label{eq:benjamin12321}
	F(qx-j)=\begin{cases}
0 & \textup{if } x_1<j,\\
1 & \textup{if } x_1>j, \\
F((0.x_2\ldots,x_n)_q) & \textup{if } x_1=j,
\end{cases}
\end{equation}
}
	and so we conclude that \eqref{eq:indep} holds at $x$.
	
	Conversely, suppose that \eqref{eq:indep} holds. Then $F(0)=\rmP(X_1=0)F(0)$, so either $\rmP(X_1=0)=1$ or $F(0)=0$. In the first case, since by stationarity the $X_n$'s are identically distributed, we obtain immediately that the $X_n$'s are IID.
	So assume that $F(0)=0$ and let $x=(0.x_1\dots x_n)_q$ with $x_1,\dots,x_n\in \{0,\dots,q-1\}$. Then
	\begin{equation*}
	    \rmP(X_1=x_1,\dots, X_n=x_n)=\rmP(x\leq X\leq x+q^{-n})=F(x+q^{-n})-F(x),
	   \end{equation*}
	    using in the last identity that $\rmP(X=x)=0$, cf.\ Lemma~\ref{lem:horia1}.
{
Thus,
	    \begin{align*}
	    \rmP(X_1=x_1,\dots, &X_n=x_n)=F(x+q^{-n}) - F(x) \\
	    &=\sum_{j=0}^{q-1}\rmP(X_1=j) \Big (F(q(x+q^{-n})-j)-F(qx-j)\Big )\\
	    &=\rmP(X_1=x_1) \Big (F((0.x_2\dots x_n)_q+q^{-n+1})-F((0.x_2\dots x_n)_q)\Big ),
	    \end{align*}
	    where in the second equality we use \eqref{eq:indep} and in the third equality we use \eqref{eq:benjamin12321}. The dependence on $x_1$ has now been isolated in the factor $\rmP(X_1=x_1)$, and using the same method for the other variables $x_2,\ldots,x_n$ we obtain
	    \begin{align*}
	    \rmP(&X_1=x_1,\dots,X_n=x_n)\\
	    &=\rmP(X_1=x_1)\sum_{j=0}^{q-1}\rmP(X_1=j) \Big (F(x_2-j+(0.x_3\dots x_n)_q+q^{-n+2})\\
	    &\phantom{=\rmP(X_1=x_1)\sum\rmP(X_1=j) \Big (}-F(x_2-j+(0.x_3\dots x_n)_q)\Big )\\	    &=\rmP(X_1=x_1)\rmP(X_1=x_2) \Big (F((0.x_3\dots x_n)_q+q^{-n+2})-F((0.x_3\dots x_n)_q)\Big )\\
	    &\;\,\vdots\\
	    &=\rmP(X_1=x_1)\cdots\rmP(X_1=x_{n-1})\sum_{j=0}^{q-1}\rmP(X_1=j) \Big (F(x_n-j+1)-F(x_n-j)\Big )\\	    
	    &=\rmP(X_1=x_1)\cdots \rmP(X_{1}=x_{n-1})\Big (\rmP(X_1=x_n)(1-F(0))+\rmP(X_1=x_{n}+1)F(0)\Big ).	
	    \end{align*}	

}	    Consequently, since $F(0)=0$ and by stationarity the $X_n$'s are identically distributed,
	\[\rmP(X_1=x_1,\dots, X_n=x_n)=\rmP(X_1=x_1)\cdots \rmP(X_{1}=x_{n})=\rmP(X_n=x_1)\cdots \rmP(X_{n}=x_{n}).\]
	Therefore, the $X_n$'s are IID.

\subsection{Proof of Proposition~\ref{prop19-1} and related examples}\label{s:Proof of prop19-1}

\begin{proof}[Proof of Proposition~\ref{prop19-1}] 
{ We first claim that for every $n\in\mathbb N$, there exists a Borel set $A_n\subseteq [0,1]$ of Lebesgue measure 1 such that for all $x\in A_n$,
	\begin{equation}\label{19-1}
	f(x)=q^{-n}\sum_{j=0}^{q^n-1} f((x+j)/q^n).
	\end{equation}
	The case $n=1$ follows by the assumption in Proposition~\ref{prop19-1}.
Define 
\begin{equation*}
\tilde{A}_2\coloneqq \bigcup_{j=0}^{q-1} \frac{A_1+j}{q},\qquad A_2\coloneqq A_1 \cap \tilde{A}_2. 
\end{equation*}	
Since the Lebesgue measure of $ \frac{A_1+j}{q} \cap  \frac{A_1+k}{q}$ is $0$ for $j \neq k$, $\tilde{A}_2$ and hence $A_2$ have Lebesgue measure $1$. For every $x\in A_2$, we have $(x+j)/q\in A_1$ and hence using \eqref{e:f-fundamental} for $f((x+j)/q)$ we obtain
\begin{align*}
f(x)=q^{-2} \sum_{j=0}^{q-1} \sum_{k=0}^{q-1} f\Big( \frac{x+j}{q^2} + \frac{k}{q} \Big) = q^{-2} \sum_{j=0}^{q^2-1} f\Big( \frac{x+j}{q^2}\Big).
\end{align*}
Continuing in this way the claim is verified.
}	
	Define $\mathcal{I}:=\int_0^1 f(t)\dd t\in \mathbb{C}$ and $I_{j,n}:=(jq^{-n},jq^{-n}+q^{-n})$. Then \eqref{19-1} gives for all $x\in A_n$,
	\begin{equation*}
	f(x)-\mathcal{I}=\sum_{j=0}^{q^n-1}  \int_{I_{j,n}}\left \{ f((x+j)/q^n) -f(y)\right \}\,\dd y,
	\end{equation*}
	so 
	\begin{equation*}
	|f(x)-\mathcal{I}|\leq \sum_{j=0}^{q^n-1}  \int_{I_{j,n}}\left \vert f((x+j)/q^n) -f(y) \right \vert \,\dd y.
	\end{equation*}
	Integrating with respect to $x\in[0,1]$ and making the change of variable 
	$t =(x+j)/q^n\in I_{j,n}$, 
	we obtain
	\begin{equation}\label{19-3}
	\int_0^1|f(x)-\mathcal{I}|\,\dd x\leq q^n\sum_{j=0}^{q^n-1}  \int_{I_{j,n}}\int_{I_{j,n}}|f(t) -f(y)|\,\dd y\,\dd t.
	\end{equation}
	Now, for any $\varepsilon>0$, there exists a uniformly continuous function $g_\varepsilon:[0,1]\to\mathbb R$ such that $\Vert f-g_\varepsilon\Vert_{L^1}\leq \varepsilon /3$, where we consider the usual $L^1$-norm. Writing 
	\[|f(t)-f(y)|\leq |f(t)-g_\varepsilon(t)|+| f(y)-g_\varepsilon(y)|+|g_\varepsilon (t)-g_\varepsilon(y)|,\]
	we obtain from \eqref{19-3} that 
	\[\int_0^1|f(x)-\mathcal{I}|\,\dd x\leq 2\varepsilon/3 +\sup_{|t-y|\leq q^{-n}} |g_\varepsilon (t)-g_\varepsilon(y)|.\]
	Since $g_\varepsilon$ is uniformly continuous, for any sufficiently large $n=n(\varepsilon)\in\mathbb N$, we have
	\[\sup_{|t-y|\leq q^{-n(\varepsilon)}} |g_\varepsilon (t)-g_\varepsilon(y)|\leq \varepsilon/3.\]
	Thus $\int_0^1|f(x)-\mathcal{I}|\,\dd x\leq \varepsilon$. Consequently, $f=\mathcal{I}$ almost everywhere on $[0,1]$. 
\end{proof}

We end this section by presenting two examples of functions $f\not\in L^1([0,1])$ where \eqref{e:f-fundamental} is satisfied but in one case $f$ is not absolutely integrable and in another case $f$ is not measurable.

\begin{example}[A solution to \eqref{e:f-fundamental} which is not in {$L^1([0,1])$}]\label{ex:7}
	Let $q=2$. Below we construct a solution to \eqref{e:f-fundamental} which is piecewise smooth on $[0,1]$, has finite jumps at all dyadic fractions $1-2^{-n}$ with $n\in\mathbb N$, but whose integral diverges. 
	
	For this we notice the following. 
	For any function $f:[0,1]\rightarrow\mathbb R$, define $g(x):=f(x)-1/(1-x)$ for $x\in[0,1)$, and let $g(1)$ be any number. 
	Then $f$ satisfies \eqref{e:f-fundamental} if and only if $g$ satisfies the equation
	\begin{equation}\label{e:g} 
	g(x)=g(x/2)/2 +g((x+1)/2)/2 +1/(2-x)\qquad\mbox{for almost all $x\in[0,1]$}.
	\end{equation}

	To construct a particular solution $g$ to \eqref{e:g}, we start by setting $g(x):=0$ for all $x\in [0,\frac12)$. Then $g(x/2)=0$ for all $x\in [0,1)$, and in accordance with \eqref{e:g} we should have 
	\begin{equation}\label{e:g2}
	g((x+1)/2)=2g(x)-2/(2-x)\qquad\mbox{for all $x\in[0,1)$},
	\end{equation}
	which is possible because of the following observations. 
	For each $n\in\mathbb N\cup\{0\}$, the map 
	\[[1-2^{-n},1-2^{-n-1})\ni {x\mapsto (x+1)/2}\in [1-2^{-n-1},1-2^{-n-2})\]
	is a bijection. Thus, for $n=1,2,\ldots$, we can inductively use \eqref{e:g2} to compute $g(x)$ for all $x\in [1-2^{-n},1-2^{-n-1})$.  We see that $g$ becomes more and more negative near $1$. Now, the function $f(x)=g(x)+1/(1-x)$ is not constant on $[0,1)$, since $f(x)=1/(1-x)$ on $[0,\frac12)$. Although $f$ satisfies \eqref{e:f-fundamental} and is smooth on each interval $[1-2^{-n},1-2^{-n-1})$ with $n\in\mathbb N$, $f$ cannot have a finite integral due to Proposition~\ref{prop19-1}. 
	Figure~\ref{fig:f_nonintegrable} shows a plot of $f$.	
\end{example}

\begin{figure}[hbt!]
	\centering
	\includegraphics[width=0.5\textwidth]{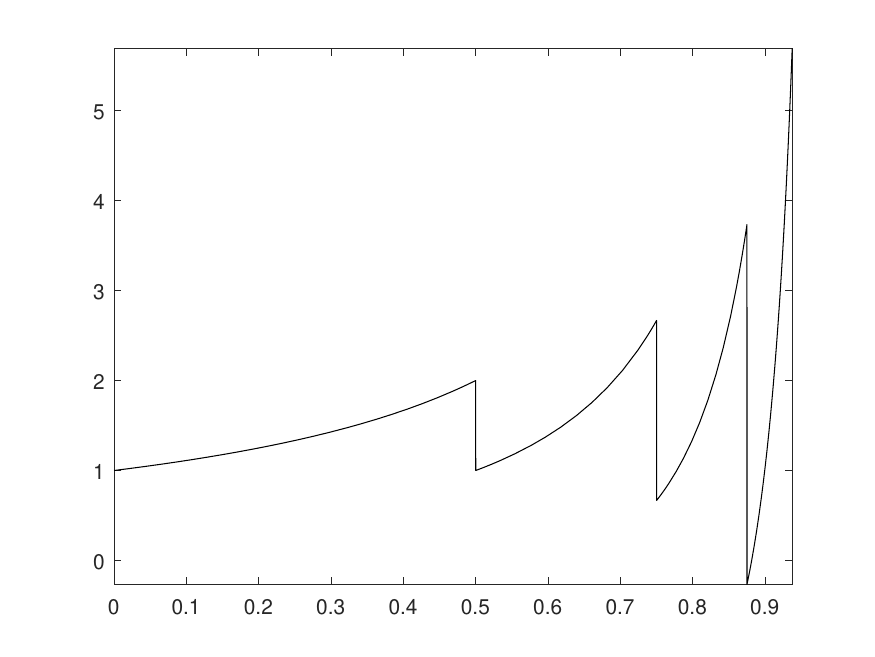}
	\caption{The function  $f(x)=g(x)+1/(1-x)$ for $x\in[0,1-2^{-4}]$, where $g$ is given as in Example~\ref{ex:7}.}
	\label{fig:f_nonintegrable}
\end{figure}

\begin{example}[A non-measurable solution to \eqref{e:f-fundamental}]\label{ex:8}
	Let $q=2$. Below we construct a solution to \eqref{e:f-fundamental}, which is bounded but cannot be measurable. 
	
	Define $G:=\{(2^n,r)\,|\,n\in\mathbb Z,\,r\in\mathbb D\}$, where 
	$\mathbb D:=\{{m}2^{-n}\,|\,m\in\mathbb Z,\,n\in\mathbb N\}$. Then $G$ is a group with product
	\[(2^m,p)(2^n,r)=(2^{m+n},p+2^mr)\qquad\mbox{for }(2^m,p),(2^n,r)\in G,\]
	and $G$ 
	acts on $\mathbb R$ by 
	\[(2^n,r)x:=2^nx+r\qquad\mbox{for }(2^n,r)\in G,\ x\in\mathbb R.\] 
	For any $x\in [0,1]$, let $M_x$ be the restriction of the orbit $\{2^nx+r\,|\,n\in\mathbb Z,\,r\in\mathbb D\}$ to $[0,1]$ (this is a countable dense set in $[0,1]$).  Given any $x\in [0,1]$, then both $x/2$ and $(x+1)/2$ belong to $M_x$. 
	
	By the axiom of choice, given any orbit restriction $M\cap [0,1]$, we can pick a representative $C(M)\in [0,1]$ and thus construct a function 
	\[f(x):=C(M_x)\in [0,1],\qquad x\in [0,1].\]
	{ Then $f$ is constant on each orbit, though with different constants on different orbits. Since $x$, $x/2$, and $(x+1)/2$ always belong to the same orbit, \eqref{e:f-fundamental} is satisfied everywhere. We now show that this (bounded) function cannot be measurable due to Proposition~\ref{prop19-1}. Indeed, if $f$ was measurable, it would be integrable and equal to a constant on $[0,1]$ outside some set $A$ of zero Lebesgue measure. Since $f$ has different (constant) values on different orbits and because $f$ is constant on $[0,1]\setminus A$, it implies that $[0,1]\setminus A$ is a subset of exactly one orbit. As the orbit is countable and has Lebesgue measure zero, it follows the Lebesgue measure of $[0,1]$ is zero. Hence we have a contradiction.  }
\end{example}

\subsection{Proof of Theorem \ref{fourier}}
First, we prove Theorem~\ref{fourier}(I) with $\tilde X$, $\tilde{F}$, and  $\tilde{f}$ as in the theorem and {$S_j(x)=(x+j)/q$ as in Section~\ref{s:2.2}}. Note that for any $k\in\mathbb{Z}$,
\begin{align}
\tilde{f}(2\pi k q)&=\int_0^1 \exp(2\pi i  x kq)\, \dd \tilde F(x)\nonumber  =\sum_{j=0}^{q-1}\int_{j/q}^{(j+1)/q} \exp(2\pi i x k q)\, \dd \tilde F(x)\\\nonumber 
&=\sum_{j=0}^{q-1}\int_{0}^{1} \exp(2\pi i (x+j) k)\, \dd (\tilde F\circ S_j)(x)\\ 
&=\int_{0}^{1} \exp(2\pi i x k) \,\dd (\sum_{j=0}^{q-1}\tilde F\circ S_j)(x). \label{may17'}
\end{align}
If $\tilde F$ satisfies \eqref{e:3} then $\dd \tilde F=\sum_{j=0}^{q-1} \dd (\tilde F\circ S_j)$, which combined with \eqref{may17'} shows that 
\begin{equation}\label{eq:tildef}
\tilde f(2\pi kq)= \tilde f(2\pi k)
\end{equation}
for all $k\in \mathbb Z$.

Now, suppose that 
\eqref{eq:tildef} holds for all $k\in \mathbb{Z}$. Define $\dd \tilde G:=\dd (\sum_{j=0}^{q-1} \tilde F\circ S_j)$ and $\tilde g(t):=\int \mathrm{e}^{ixt} \dd \tilde G(x)$.  From \eqref{may17'} we have that $\tilde{f}(2\pi kq)=\tilde{g}(2\pi k)$  which together with \eqref{eq:tildef} implies $\tilde f(2\pi k)=\tilde g(2\pi k)$ for all $k\in \mathbb{Z}$. Recall that any continuous $\mathbb{Z}$-periodic function $\varphi\colon \R\to \mathbb{C}$ is a uniform limit of trigonometric polynomials $\sum_{k=- N}^N c_k^N\mathrm{e}^{2\pi i kx}$, where each $c_k^N\in \mathbb{C}$ and $N\in \mathbb{N}$ \cite{BabyRudin}.  Taking the limit $N\to \infty$ and using Lebesgue's dominated convergence theorem we get 
\begin{equation}\label{eq:zperiod}
\int_{[0,1]} \varphi(x)\,\dd \tilde F(x)=\int_{[0,1]} \varphi(x) \,\dd \tilde G(x).
\end{equation}
The remaining part of this proof consists of verifying  \eqref{eq:zperiod} when $\varphi$ is merely continuous and then applying the Riesz-Markov theorem{, which implies that a positive linear functional on $C_0([0,1])$ can be represented by a unique measure (see \cite{Riesz1909})}. First, we establish equality of $\dd \tilde F$ and $\dd \tilde G$ at the endpoints of the interval $[0,1]$. Since the indicator function of $\mathbb{Z}$ can be  pointwise approximated by uniformly bounded continuous $\mathbb{Z}$-periodic functions, another application of Lebesgue's dominated convergence theorem in \eqref{eq:zperiod} gives
\[\dd\tilde F(\{0\})+\dd\tilde F(\{1\})=\dd\tilde G(\{0\})+\dd\tilde G(\{1\}),\]
where by definition of $\mathrm d\tilde G$,
\[\dd\tilde G(\{0\})=\dd \tilde F(\{0\}+\dd \tilde F(\{1/q\})+\dots+\dd \tilde F(\{(q-1)/q\})\]
and
\[\dd\tilde G(\{1\})=\dd \tilde F(\{1/q\})+\dots+\dd \tilde F(\{(q-1)/q\})+\dd\tilde F(\{1\}).\]
From this it immediately follows that  $\dd \tilde{F}(\{j/q\})=0$ for  $j=1,\ldots, q-1$, which leads to $\dd\tilde F(\{0\})=\dd \tilde G(\{0\})$ and $\dd\tilde F(\{1\})=\dd \tilde G(\{1\})$. Second, we extend \eqref{eq:zperiod} to all continuous functions on $[0,1]$ in the following way. If $\psi\colon [0,1]\to \mathbb{C}$ is continuous, define for $n=1,2,\dots$,
{
\[\varphi_n(x):= \begin{cases}
	\psi(x)& \textup{ for } x\in [0,1-\frac 1 n],\\
	n[\psi(0)-\psi(1-\frac{1}{n})]\Big(x-(1-\frac 1 n)\Big)+\psi(1-\frac{1}{n})& \textup{ for } x\in (1-\frac{1}{n},1].
	\end{cases}
	\] 
}
This is a uniformly bounded sequence of continuous and $\mathbb{Z}$-periodic functions converging pointwise to $\psi$ on $[0,1)$. Since $\dd \tilde F(\{1\})=\dd \tilde G(\{1\})$, it follows from Lebesgue's dominated convergence theorem that
\begin{align*}
\int_{[0,1]} \psi(x) \,\dd \tilde F(x)&= \dd \tilde F(\{1\})\psi(1)+\lim_{n\to \infty} \int_{[0,1)}\varphi_n(x) \,\dd \tilde F(x)\\
&=\dd \tilde G(\{1\})\psi(1)+\lim_{n\to \infty} \int_{[0,1)}\varphi_n(x) \,\dd \tilde G(x)\\
&=\int_{[0,1]} \psi(x) \,\dd \tilde G(x).
\end{align*}
{
The maps
\begin{align*}
C_0([0,1])\ni\psi \mapsto \int_{[0,1]} \psi(x) \mathrm{d}\tilde{F}(x) \in \mathbb{C}, \quad C_0([0,1])\ni\psi \mapsto \int_{[0,1]} \psi(x) \mathrm{d}\tilde{G}(x) \in \mathbb{C}
\end{align*}
are positive linear functionals and can be represented by a unique measure according to Riesz-Markov theorem (see \cite{Riesz1909}), thus $\dd\tilde F=\dd \tilde G$ on $[0,1]$.} Equivalently $\tilde F$ satisfies \eqref{e:3} and the proof of Theorem~\ref{fourier}(I) is complete.

Next, we prove Theorem~\ref{fourier}(II). As the `if' part of the proof follows from a direct calculation, we only prove that if $c:=\lim_{t\to \infty}f(t)\in \mathbb{C}$ exists, then $0\leq c\leq 1$ and for every $x\in[0,1]$,
\begin{equation}\label{e:Kasper}
F(x)=(1-c)x+cH(x)
\end{equation} 
where $H$ is the Heaviside function.
Since $\lim_{t \to \infty} f(t) = c$, for any $a\in \R$ a straightforward calculation gives 
\begin{equation*}
\lim_{T \to \infty} T^{-1}\int_0^T f(t)\mathrm{e}^{-ita}\,\dd t =\left\{
\begin{matrix}
c\quad {\rm if}\; a=0, \\
0\quad {\rm if}\; a\neq 0.
\end{matrix}\right . 
\end{equation*}
{
Define 
\begin{align*}
f_T (x) \coloneqq \frac{1}{T}\int_0^T \mathrm e^{it(x-a)} \mathrm{d}t = \begin{cases} 1 & \textup{ if } x=a, \\
\frac{1}{i(x-a)T}(\mathrm e^{iT(x-a)}-1) & \textup{ if } x \neq a.
\end{cases}
\end{align*}
Using Fubini's theorem we obtain
\begin{align*}
\frac{1}{T}\int_0^T f(t)\mathrm e^{-ita}\dd t = \int_{[0,1]} f_T(x) \dd F(x).
\end{align*}
Since $\vert f_T(x)\vert \leq 1$ and $f_T(x)$ converges pointwise to the indicator function of the set $\{a\}$, an application of Lebesgue's dominated convergence theorem shows that the above limit equals $\dd F(\{a\})$. Consequently, $c=F(0)\in[0,1]$ and $F$ is continuous on $(0,1)$. 

It remains to verify \eqref{e:Kasper}. If $c=1$, then $F=H$ and \eqref{e:Kasper} follows. }
Assume $c<1$. Then 
\[G(x):=(F(x) - cH(x))/(1-c)\]
is a continuous CDF that also satisfies the stationarity condition \eqref{e:3}. Thus, defining $g(t):=\int \mathrm{e}^{itx}\dd G(x)$, we obtain the identity $g(2\pi kq)=g(2\pi k)$ for all $k\in \mathbb{Z}$. A repeated use of this identity gives for any $m\in\mathbb N$ and $k\in\mathbb{Z}$ that
\begin{equation}\label{juni20}
g(2\pi k q^{m})=g(2\pi k).
\end{equation}
From the definition of $G$ it follows that $\lim_{t \to \infty} g(t) = 0${,} and since $c$ is real we also obtain
$\lim_{t \to -\infty} g(t) = 0$.
Combining this with \eqref{juni20} where we take $m$ to infinity we conclude that $g(2\pi k)=0$ for all $k\in\mathbb{Z}\setminus\{ 0\}$.
If $h(t):=\int_0^1 \mathrm{e}^{itx}\dd x$ then also $h(2\pi k)=0$ for all $k\in \mathbb{Z}\setminus\{0\}$, and since $\dd G(\{1\})=0$ it follows from the same arguments as in the proof of Theorem~\ref{fourier}(I) that $\dd G=\dd x$ on $[0,1]$.
Consequently, $F(x)=(1-c)x+cH(x)$ for all $x\in[0,1]$.

\subsection{Proof of Theorem~\ref{thm1}}\label{s:proofSecondThm}
The Lebesgue-Radon-Nikodym theorem \cite{Folland,Billingsley1995} leads to the decomposition $F(x) = \theta_1 F_1(x) + \theta_2 F_2(x) + \theta_3 F_3(x)$ for $x\in [0,1]$, where $\theta_1,\theta_2,\theta_3\geq 0$ and $\theta_1 + \theta_2 + \theta_3 = 1$, $F_1$ is an absolutely continuous CDF on $[0,1]$, $F_2$ is a discrete CDF on $[0,1]$, and $F_3$ is singular continuous CDF on $[0,1]$.
Proposition~\ref{prop19-1} implies that $F_1' =1$ almost everywhere on $[0,1]$, and so since $F_1$ is absolutely continuous, $F_1(x)=x$ for all $x\in [0,1]$. Thus $F_1$ satisfies \eqref{e:3} and it only remains to show that $F_2$ is as claimed in Theorem~\ref{thm1}(II) and satisfies \eqref{e:3}.

\subsubsection{Proof of Theorem~\ref{thm1}(II)}\label{s:5.1.2}

\begin{lemma}\label{lem:b2}
	Suppose that $F$ satisfies \eqref{e:3} and $s\in[0,1]$ is a discontinuity of $F$. Then there exists $n\in\mathbb{N}$ and a cycle $(s_1,\ldots,s_n)$ in the sense of \eqref{e:sequence} such that $s=s_1$ and $s_1,\ldots,s_n$ are discontinuities of $F$. Furthermore, the jumps of $F$ at these $n$ discontinuities are all equal.
\end{lemma}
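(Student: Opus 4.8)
The plan is to translate the functional equation \eqref{e:3} into a statement about the atoms of $\dd F$ and then exploit an induced monotonicity of the atom masses along the orbits of the base-$q$ shift. First I would recall, as established in the proof of Theorem~\ref{fourier}(I), that \eqref{e:3} is equivalent to the measure identity $\dd F=\sum_{j=0}^{q-1}\dd(F\circ S_j)$, where $S_j(x)=(x+j)/q$. Evaluating this identity on singletons and using $\dd(F\circ S_j)(\{y\})=\dd F(\{S_j(y)\})$ (which holds since each $S_j$ is continuous and increasing) gives, for every $y\in[0,1]$,
\[
\dd F(\{y\})=\sum_{j=0}^{q-1}\dd F(\{S_j(y)\}).
\]
A discontinuity $s$ of $F$ is precisely a point with $\dd F(\{s\})>0$, and by Lemma~\ref{lem:horia2} such an $s$ is not a base-$q$ fraction, so I may assume $s\in(0,1)\setminus\mathbb{Q}_q$ (the endpoints $0,1$ are fixed points of the shift and correspond to the trivial order-$1$ cycles). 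There the base-$q$ left shift $T(x)=qx\bmod 1$ is well defined, and $\{S_0(y),\dots,S_{q-1}(y)\}$ is precisely the set $T^{-1}(y)$ of the $q$ preimages of $y$.

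Next I would extract a monotonicity property. Applying the singleton identity to $T(y)$ and isolating the contribution of $y$ (which is one of the preimages of $T(y)$) yields $\dd F(\{T(y)\})\ge\dd F(\{y\})$. Iterating, every point of the forward orbit $\{T^k(s)\}_{k\ge0}$ carries mass at least $p:=\dd F(\{s\})>0$. Since $\dd F$ is a probability measure, only finitely many points can have mass $\ge p$, so the orbit is finite and $s$ is eventually periodic: $T^{\ell+m}(s)=T^{\ell}(s)$ for some minimal pre-period $\ell\ge0$ and period $m\ge1$. Writing $c:=T^{\ell}(s)$ and running the monotonicity around the cycle $\{c,T(c),\dots,T^{m-1}(c)\}$ forces $\dd F(\{c\})\le\dd F(\{T(c)\})\le\dots\le\dd F(\{T^m(c)\})=\dd F(\{c\})$, so all masses on the cycle coincide; this already delivers the equal-jumps assertion.

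I then need to rule out a non-trivial pre-period, which I expect to be the crux. Evaluating the singleton identity at the cycle point $c$, its in-cycle predecessor $c'$ (the unique cycle point with $T(c')=c$) is one of the preimages $S_j(c)$ and satisfies $\dd F(\{c'\})=\dd F(\{c\})$ by the previous step; hence the identity forces every \emph{other} preimage of $c$ to have zero mass. If $\ell\ge1$, then $T^{\ell-1}(s)$ is a preimage of $c$ with positive mass, so it must equal $c'$ and thus lie on the cycle, contradicting the minimality of $\ell$ (which ensures $T^{\ell-1}(s)$ is not periodic). Therefore $\ell=0$ and $s$ is purely periodic.

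Finally, with $n:=m$ the minimal period we have $s=(0.\overline{t_1\dots t_n})_q$, and its $T$-orbit is exactly the set of cyclic shifts $\{s_1,\dots,s_n\}$ of \eqref{e:sequence} with $s=s_1$; these are pairwise distinct by minimality of $n$, so they form a cycle of order $n$, every point of which is a discontinuity of $F$ with the same jump. The main obstacle is the pre-period elimination: the delicate point is that equality of the masses along the cycle squeezes all off-cycle preimages of a cycle point down to zero mass, which is what excludes a transient tail feeding into the cycle.
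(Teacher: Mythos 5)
Your proof is correct, and it reaches the cycle by a genuinely different route from the paper's. Both arguments hinge on the same atom identity $\dd F(\{y\})=\sum_{j=0}^{q-1}\dd F(\{S_j(y)\})$ for $y\in(0,1)$ --- the paper derives it by letting $\delta\downarrow 0$ in the functional equation (its \eqref{eq:ben123} and \eqref{eq:8-6-2}), you by evaluating on singletons the measure identity $\dd F=\sum_{j}\dd (F\circ S_j)$ quoted from the proof of Theorem~\ref{fourier}(I) --- and both exploit the resulting monotonicity of jumps. The difference is the mechanism producing pure periodicity of $s$. The paper works \emph{backwards}: a mass pigeonhole shows that the iterated preimage sets $L_0(s),\dots,L_k(s)$ with $k>1/J_s$ cannot be pairwise disjoint, and a digit-comparison argument shows the first coincidence must involve $L_0(s)=\{s\}$ itself, so $s$ is the unique fixed point of a contraction $S_{i_1}\circ\dots\circ S_{i_n}$; pure periodicity arrives in one stroke and transient behaviour never has to be discussed. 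You work \emph{forwards}: monotonicity along the orbit of $T(x)=qx \bmod 1$ plus the fact that a probability measure has at most $1/p$ atoms of mass $\ge p$ gives eventual periodicity almost for free, but you then need the extra squeeze argument --- the in-cycle preimage of a cycle point already exhausts its mass, so every off-cycle preimage is null --- to rule out a pre-period; this step has no counterpart in the paper, where the corresponding work is done by the digit-comparison/minimality argument. Your route is dynamically more transparent and isolates a reusable fact (no atom outside a cycle can feed into it), while the paper's avoids the period/pre-period dichotomy altogether. Two details worth making explicit in a full write-up: (i) the forward orbit stays in $(0,1)\setminus\mathbb{Q}_q$ (if some $T^k(s)$ were $0$, $1$, or a base-$q$ fraction, then $s$ would be one too, contradicting Lemma~\ref{lem:horia2}), so the atom identity applies at every iterate; (ii) the measure identity you invoke requires \eqref{e:3} for all $x\in[0,1]$, not just for base-$q$ fractions, which is exactly what Theorem~\ref{HC1}(II) supplies.
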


\begin{proof} We start by investigating what can happen at $0$ and $1$. Both $0=(0.\overline{0})_q$ and $1=(0.\overline{q-1})_q$ are purely repeating base-$q$ numbers of order 1 and they can be discontinuity points because both $H(x)$ and $H(x-1)$ satisfy \eqref{e:3}. Therefore in the following, we will only consider possible discontinuities at $x\in (0,1)$. {Our idea is then to show that each point of discontinuity belongs to a `cycle' of finitely many points which are all 
discontinuities and the jump at each point is the same. }
	
	As in Section~\ref{s:2.2}, define $S_j(x):=(x+j)/q$ for $x\in (0,1)$ and $j=0,\dots,q-1$. Then, by Theorem~\ref{HC1}, it follows that for any $x\in (0,1)$, there exists a sufficiently small $\delta_0>0$ such that for all $\delta\in (0,\delta_0)$,
	\begin{equation}\label{eq:ben123}
	F(x)-F(x-\delta)=\sum_{j=0}^{q-1}[F(S_j(x))-F(S_j(x-\delta))].
	\end{equation}
	For $x\in (0,1)$, define $L_0(x):=\{x\}$ and $L_n(x):=\bigcup_{j=0}^{q-1}S_j(L_{n-1}(x))$, $n=1,2,\ldots$. Furthermore, let $J_x:=\lim_{\delta\downarrow 0} [F(x)-F(x-\delta)]$ denote the jump of $F$ at $x\in (0,1)$. Taking $\delta\downarrow 0$ in \eqref{eq:ben123} shows that 
	\begin{equation}\label{eq:8-6-2}
	J_x=\sum_{y\in L_1(x)} J_y.
	\end{equation} 
	Suppose $s\in (0,1)$ is a discontinuity of $F$ with jump $J_s>0$ and let $k>1/J_s$ be an integer. First, we show that the sets $L_0(s), \dots, L_k(s)$ are not pairwise disjoint. For the purpose of a contradiction assume that $L_0(s), \dots, L_k(s)$ are pairwise disjoint. By assumption $s\not \in L_1(s)$, thus replacing $x=s$ in \eqref{eq:8-6-2} shows that $F$ has a total jump of at least $2J_s$: one $J_s$ from $s$, and the other $J_s$ from the accumulated contribution of all the points of $L_1(s)$. {Using \eqref{eq:8-6-2} for $x=S_j(s)$ with $j=0,\ldots,q-1$, we see that the possible jump at each $S_j(s)$ equals the total accumulated jump at the points of $L_1(S_j(s))$.} Hence, by assumption the total jump of $F$ at the points of $L_2(s)$ is again $J_s$. Continuing this way we obtain that $\sum_{x\in L_j(s)} J_x =J_s$ for $j=0,1,\dots, k$. By the choice of $k$ this contradicts $F\leq 1$ and hence the sets $L_0(s), \dots, L_k(s)$ are not pairwise disjoint.
	
	Next, let $n$ denote the smallest integer (not necessarily larger than $1/J_s$) such that $L_0(s),\dots,L_n(s)$ are not pairwise disjoint. We will show that $s\in L_n(s)$. Suppose this is not the case. Then by the choice of $n$ there exist an integer $ m$ with $1\leq m<n$ and $j_1,\dots,j_{m},j_1',\dots,j_{n}'\in \{0,\dots,q-1\}$ such that 
	\begin{equation}\label{eq:8-6-1}
	S_{j_1'}\circ \dots \circ S_{j_{n}'}(s)=S_{j_1}\circ \dots \circ S_{j_m}(s).
	\end{equation}
	If $s=(0.t_1 t_2\dots)_q$, then from \eqref{eq:8-6-1} it follows that
	\[(0.j_1\dots j_m t_1t_2\dots)_q=S_{j_1}\circ \dots \circ S_{j_m}(s)=S_{j'_1}\circ \dots \circ S_{j'_{n}}(s)=(0.j'_1\dots j'_{n} t_1t_2\dots)_q,\]
	and thus $S_{j_{m+1}'}\circ\dots \circ S_{j_{n}'}(s)=s$, contradicting the minimality of $n$. Hence, $s\in L_n(s)$ which implies that there exist $i_1,\dots,i_n\in\{0,\dots,q-1\}$ such that $S_{i_1}\circ \dots \circ S_{i_n}(s)=s$. 
	Note that $(0.\overline{i_1\dots i_n})_q$ is also a fixed point of $S_{i_1}\circ\dots S_{i_n}$ but since $S_{i_1}\circ\dots S_{i_n}$ is a contraction on $[0,1]$ (with Lipschitz constant $q^{-n}$) it has a unique fixed point and we must have $s=(0.\overline{i_1\dots i_n})_q$.
	
	By definition $S_{i_n}(s)\in L_1(s)$ and from \eqref{eq:8-6-2} we deduce $J_s\geq J_{S_{i_n}(s)}$. Letting $x=S_{i_n}(s)$ in the left hand side of \eqref{eq:8-6-2} we have that $J_s\geq J_{S_{i_n}(s)}\geq J_{S_{i_{n-1}}\circ S_{i_n}(s)}$. Continuing in this way we see that 
	\[J_s\geq J_{S_{i_n}(s)}\geq \dots\geq J_{S_{i_2}\circ \dots \circ S_{i_n}(s)}\geq J_{S_{i_1}\circ \dots \circ S_{i_n}(s)}=J_{s}.\]
	This shows that the numbers $s,S_{i_n}(s),\dots, S_{i_2}\circ \dots \circ S_{i_n}(s)$ are discontinuities of $F$ with the same jump. By the minimality of $n$ these points are distinct and hence constitute a cycle.
	
\end{proof}	
Now, Theorem~\ref{thm1}(II) follows from Lemma~\ref{lem:b2} and the fact that $F$ has countably many points of discontinuity.

\subsubsection{Proof that $F_2$ satisfies \eqref{e:3}}
Because of Theorem~\ref{thm1}(II), in order to show that $F_2$ satisfies \eqref{e:3}, without loss of generality we may assume that
\begin{equation*}
F_2(x)=\frac{1}{n}\sum_{j=1}^n  H(x-s_j),
\end{equation*}
where $(s_1,\ldots,s_n)$ is a cycle.
For each $j \in \{1,\dots,n\}$, let $s_{j}(1)$ denote the first digit in the base-$q$ expansion of $s_j$ and note that $qs_j=s_{j-1}+s_{j}(1)$, where we define $s_0:=s_n$. Hence, for any $k\in \mathbb{Z}$, the characteristic function $f_2$ of $F_2$ satisfies 
\[ f_2(2\pi k q)=\frac{1}{n}\sum_{j=1}^n \mathrm{e}^{2\pi i k q s_j} =\frac{1}{n}\sum_{j=1}^n \mathrm{e}^{2\pi i k s_{j-1}}= f_2(2\pi k).\]
Then by Theorem~\ref{fourier}(I) it follows that $F_2$ satisfies \eqref{e:3}.


\bibliographystyle{APT}
\bibliography{bibliography}

\end{document}